\documentclass[12pt]{article}

\usepackage{fullpage}
\usepackage{amsfonts,amssymb,stmaryrd,amsthm,amsmath}
\usepackage{frcursive}

\usepackage{xcolor}
\usepackage{graphicx,tikz,tabularx}
\usetikzlibrary{matrix,arrows,decorations.markings}

\newcommand{\mb}[1]{{#1}}

\newcommand\longto{{\longrightarrow}}

\newcommand\CC{\mathbb{C}}
\newcommand\NN{\mathbb{N}}\newcommand\ZZ{\mathbb{Z}}
\DeclareMathOperator{\Spec}{Spec}
\newcommand\bAA{\mathbb{A}}

\newcommand{\bl}{\ell}
\newcommand\inv{{^{-1}}}

\newcommand\Aut{{\operatorname{Aut}}}
\newcommand\End{{\operatorname{End}}}
\newcommand\Mc{{\mathcal M}}
\newcommand\Ad{{\operatorname{Ad}}}\newcommand\Hom{{\operatorname{Hom}}}
\newcommand\GL{{\operatorname{GL}}}\newcommand\ad{\operatorname{ad}}
\newcommand\Id{{\operatorname{Id}}}
\newcommand\Ker{{\operatorname{Ker}}}

\renewcommand\sl{{\mathfrak{sl}}}\newcommand\gl{{\mathfrak{gl}}}

\renewcommand{\lg}{{\mathfrak g}}
\newcommand{\lh}{{\mathfrak h}}\newcommand{\lz}{{\mathfrak z}}\newcommand{\lc}{{\mathfrak c}}
\newcommand{\lb}{{\mathfrak b}}
\newcommand{\oIb}{{\overline{I\lb}}} 
\newcommand{\lw}{{\mathfrak w}}
\renewcommand{\ln}{{\mathfrak n}}
\newcommand{\lp}{{\mathfrak p}}
\renewcommand{\lq}{{\mathfrak q}}
\newcommand{\la}{{\mathfrak a}}
\newcommand{\bla}{\bar{\la}}
\newcommand{\lgt}{\tilde{\mathfrak g}}
\newcommand{\lbt}{\tilde{\mathfrak b}}
\newcommand{\lnt}{\tilde{\mathfrak n}}
\newcommand{\oIlb}{\overline{I\lb}}
\newcommand{\lgkm}{{\mathfrak g}^{\rm KM}}
\newcommand{\lbkm}{{\mathfrak b}^{\rm KM}}
\newcommand{\lnkm}{{\mathfrak n}^{\rm KM}}

\newcommand{\bolda}{\boldsymbol{\lambda}}
\newcommand{\boldmu}{\boldsymbol{\mu}}
\newcommand{\func}[4]{
\begin{array}{c c l}#1&\longrightarrow & #2\\ #3&\longmapsto &
                                                               #4\end{array}
}

\newcommand{\Dt}{\tilde{\mathcal D}}
\newcommand{\mcM}{\mathcal M}
\newcommand{\vV}{{\mathcal V}}


\usepackage[utf8]{inputenc}  

\usepackage[T1]{fontenc}

\usepackage{hyperref}

\newtheorem{prop}{Proposition}
\newtheorem{theo}[prop]{Theorem}
\newtheorem{lemma}[prop]{Lemma}
\newtheorem{coro}[prop]{Corollary}

\newenvironment{remark}{\noindent{\bf Remark.}}{}

\setlength{\fboxrule}{2pt}

\newcommand\DynkinNodeSize{2mm}
\newcommand\DynkinArrowLength{3mm}
\tikzset{
  dnode/.style={
    circle,
    inner sep=0pt,
    minimum size=\DynkinNodeSize,
    fill=white,
    draw},
  middlearrow/.style={
    decoration={markings,
      mark=at position 0.6 with
      {\draw (0:0mm) -- +(+135:\DynkinArrowLength); \draw (0:0mm) -- +(-135:\DynkinArrowLength);},
    },
    postaction={decorate}
  },
  leftrightarrow/.style={
    decoration={markings,
      mark=at position 0.999 with
      {
      \draw (0:0mm) -- +(+135:\DynkinArrowLength); \draw (0:0mm) -- +(-135:\DynkinArrowLength);
      },
      mark=at position 0.001 with
      {
      \draw (0:0mm) -- +(+45:\DynkinArrowLength); \draw (0:0mm) -- +(-45:\DynkinArrowLength);
      },
    },
    postaction={decorate}
  },
  sedge/.style={
  },
  dedge/.style={
    middlearrow,
    double distance=0.5mm,
  },
  tedge/.style={
    middlearrow,
    double distance=1.0mm+\pgflinewidth,
    postaction={draw}, 
  },
  infedge/.style={
    leftrightarrow,
    double distance=0.5mm,
  }
}

\begin{document}
\title{\bf On the automorphisms of the Drinfel'd double of a Borel Lie subalgebra}
\author{Michaël Bulois and Nicolas Ressayre}

\maketitle

\begin{abstract}
Let $\lg$ be a complex simple Lie algebra with a Borel subalgebra $\lb$. 
Consider the semidirect product   $I\lb=\lb\ltimes\lb^*$, where 
the dual $\lb^*$ of $\lb$ is equipped with the coadjoint action of
$\lb$ and is considered as an abelian ideal of  $I\lb$. 
We describe the automorphism group $\Aut(I\lb)$ of the Lie algebra $I\lb$.
In particular we prove that it contains the automorphism group of  the extended Dynkin diagram of $\lg$.
In type $A_n$, the dihedral subgroup  was recently proved to be contained 
in $\Aut(I\lb)$ by Dror
Bar-Natan and Roland van der Veen in \cite{BNVDV} (where $I\lb$ is
denoted by $I{\mathfrak u}_n$). 
Their construction is handmade and they asked for an explanation
which is provided by this note.
Let $\ln$ denote the nilpotent radical of $\lb$. We obtain
  similar results
for $I\lb=\lb\ltimes\ln^*$ that is both an Inönü-Wigner contraction of
$\lg$ and the quotient of $I\lb$ by its center.
\end{abstract}

\section{Introduction}

Given any complex Lie algebra $\la$, one can form the {\it associated
  Drinfeld double} $(I\la,\la)$. Here, 
$I\la=\la\ltimes\la^\ast$ is  the semidirect product of $\la$ with its dual
$\la^\ast$ where $\la^\ast$ is considered as an abelian ideal
and $\la$ acts on $\la^\ast$ via the coadjoint action. 

As mentioned in \cite{BNVDV}, for applications in knot theory and
representation theory, the most important case is when $\la=\lb$ is the
Borel subalgebra of some simple Lie algebra $\lg$. It is precisely the
situation studied here.
In addition to \cite{BNVDV}, several examples of these algebras appear
with variations in the literature. In \cite{NW}, Nappi-Wittney use the
case when $\lg=\sl_2$ in conformal field theory. Several authors also
consider $\oIb:=\lb\ltimes \ln^*$ where $\ln$ is the derived subalgebra of
$\lb$. It is the quotient of $I\lb$ by its center.
Note that $\lb\ltimes \ln^*$ is a contraction of $\lg$ (see
Section~\ref{sec:defIb} for details).
In \cite{KZ:Brauer}, Knutson and Zinn-Justin meet this algebra for $\lg=\mathfrak{gl}_n$ in the associative setting, see below. 
In \cite{F1, F2}, Feigin uses $\lb\ltimes \ln^*$ in order to study degenerate flag varieties for $\lg=\mathfrak{sl}_n$. 
For a general semisimple Lie algebra $\lg$, in \cite{PY1}, Panyushev and Yakimova study the invariants of $\lb\ltimes \ln^*$  under the action of their adjoint group.
Finally, in \cite{PY2,Ph}, similar considerations are studied replacing $\lb$ by an arbitrary parabolic subalgebra of $\lg$.

The aim of this note is to give  new interpretations of $I\lb$ and
$\oIb$ in the language of Kac-Moody algebras and to completely
describe the automorphism groups of $I\lb$ and $\oIb$. 

Before describing this group, we introduce some notation.
Let $r$ denote the rank of $\lg$ and $G$ the adjoint group with Lie
algebra $\lg$. Let $B$ be the Borel subgroup of $G$ with $\lb$ as Lie
algebra.
Consider two abelian additive groups: the quotient $\lg/\lb$ and the
space $\Mc_r(\CC)$ of $r\times r$-matrices.

An important ingredient is the extended Dynkin diagram of $\lg$. 
On Figure~\ref{fig:extD}, these diagrams and
their automorphism groups are shortly recalled (see Section~\ref{sec:affLA}). The notation $D_{(\ell)}$ stands for the dihedral group of order $2\ell$, not to be confused with the Dynkin diagram of type $D_{\ell}$.

The following is the main result of the paper.
\begin{theo}
\label{th:AutIntro}
The neutral component $\Aut(I\lb)^\circ$ of the automorphism group $\Aut(I\lb)$ of the
Lie algebra $I\lb$ decomposes as
$$
\CC^*\ltimes\bigg(
(B\ltimes\lg/\lb)\times \Mc_r(\CC)
\bigg ).
$$
The group of components $\Aut(I\lb)/\Aut(I\lb)^\circ$ is isomorphic to
the automorphism group of the extended Dynkin diagram of $\lg$ and can
be lifted to a subgroup of $\Aut(I\lb)$.
\end{theo}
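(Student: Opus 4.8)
The plan is to split $\Aut(I\lb)$ into its identity component, obtained by a derivation count, and its group of components, obtained from a Kac--Moody model of $I\lb$.

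\emph{The explicit automorphisms.} The adjoint Borel $B$, with maximal torus $H\subseteq B$, acts on $I\lb=\lb\ltimes\lb^{*}$ by $\Ad$ on $\lb$ and coadjointly on $\lb^{*}$; this is faithful since $\lz(\lb)=0$. The dilation $s_{t}\colon(x,\xi)\mapsto(x,t\xi)$ is an automorphism for every $t\in\CC^{*}$. As $\lz(\lb)=0$ and $[\lb,\lb]=\ln$, the centre of $I\lb$ is $\lh^{*}$ and $I\lb/[I\lb,I\lb]\cong\lh$; hence for each $A\in\Hom(\lh,\lh^{*})\cong\Mc_{r}(\CC)$ the map $\varphi_{A}\colon(x,\xi)\mapsto(x,\xi+A(\bar x))$, with $\bar x$ the class of $x$ in $\lh$, is an automorphism, $A(\bar x)$ being central and $\ln$ killed by $x\mapsto\bar x$. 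Moreover $\ad(I\lb)\cong\oIb$, and $\operatorname{Inn}(I\lb)=\langle\exp\ad\lb\rangle\ltimes\exp(\ad\lb^{*})$ is isomorphic to $B\ltimes\lg/\lb$ (the adjoint group of $\oIb$), using $\exp(\ad\lb^{*})\cong\lb^{*}/\lz(I\lb)\cong\lg/\lb$ as $B$-modules. The subgroup $\exp(\ad\lb^{*})$ commutes with all $\varphi_{A}$, the $\varphi_{A}$ commute with $B$, and $s_{t}$ dilates both the $\lg/\lb$- and the $\Mc_{r}(\CC)$-factor while fixing $B$; so these assemble into a closed connected subgroup $G_{0}=\CC^{*}\ltimes\bigl((B\ltimes\lg/\lb)\times\Mc_{r}(\CC)\bigr)$ of $\Aut(I\lb)$, of dimension $1+\dim\lg+r^{2}$.

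\emph{$\Aut(I\lb)^{\circ}=G_{0}$.} Since $\Aut(I\lb)$ is algebraic and $G_{0}$ closed and connected, it suffices to check $\dim\operatorname{Der}(I\lb)=1+\dim\lg+r^{2}$, i.e.\ $\dim H^{1}(I\lb,I\lb)=1+r^{2}$, as $\dim\ad(I\lb)=\dim\oIb=\dim\lg$. Grade $\operatorname{Der}(I\lb)$ by $\lh$-weight via $D\mapsto[\ad h,D]$; the weights of $I\lb$ are $0$ (multiplicity $2r$), $\Phi^{+}$ and $-\Phi^{+}$ (multiplicity $1$). A routine weight-by-weight argument shows the nonzero-weight part of $\operatorname{Der}(I\lb)$ is inner. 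A weight-$0$ derivation acts by scalars on each $\lg_{\alpha}$ and $(\lg_{\alpha})^{*}$ and by blocks $\lh\to\lh\oplus\lh^{*}$, $\lh^{*}\to\lh^{*}$; the bracket relations force, after subtracting a suitable $\ad h$, the $\lg_{\alpha}$-scalars to vanish, the $(\lg_{\alpha})^{*}$-scalars to equal a single constant, the $\lh\to\lh$ block to vanish, and the $\lh^{*}\to\lh^{*}$ block to equal that constant, while the $\lh\to\lh^{*}$ block is unconstrained. Hence $H^{1}(I\lb,I\lb)\cong\CC\oplus\Mc_{r}(\CC)$ (the scaling derivation $(x,\xi)\mapsto(0,\xi)$ together with the $\varphi_{A}$-directions), of dimension $1+r^{2}$, so $\Aut(I\lb)^{\circ}=G_{0}$.

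\emph{The component group and the lift.} Identifying $\lb^{*}$ with $\lg/\ln$ by the Killing form realizes $I\lb$ as $\lbkm/\mathcal J$, where $\lbkm=\lb\oplus(t\lg\otimes\CC[t])$ is the affine Borel (the central element and scaling derivation play no role here) and $\mathcal J=t\ln\oplus(t^{2}\lg\otimes\CC[t])$ is a Lie ideal of it --- the Kac--Moody interpretation announced in the introduction. For the maximal torus $\tilde T=\CC^{*}\times\Ad(H)$ of $G_{0}$, the $\tilde T$-weights of $I\lb$ are exactly the affine roots (and $0$) lying in $[0,\delta]$, namely $S=\{0\}\cup\Phi^{+}\cup\{\delta-\gamma:\gamma\in\Phi^{+}\cup\{0\}\}$, with multiplicity one off $\{0,\delta\}$, and the indecomposable elements of $S\setminus\{0,\delta\}$ are precisely the affine simple roots $\alpha_{0},\dots,\alpha_{r}$. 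Every $\sigma\in\Aut(\text{ext.\ Dynkin})$ fixes $\delta$ and preserves dominance, hence preserves $S$; lifted to $\lgkm$ it preserves $\lbkm$ and $\mathcal J$, so it descends to $\Aut(I\lb)$, and for $\sigma\ne1$ it acts nontrivially on $I\lb/[I\lb,I\lb]=\lh$ whereas $G_{0}$ acts trivially there --- this furnishes the asserted lift $\Aut(\text{ext.\ Dynkin})\hookrightarrow\Aut(I\lb)$, injective modulo $G_{0}$. Conversely, given $\varphi\in\Aut(I\lb)$, compose with an element of $G_{0}$ so that $\varphi$ normalizes $\tilde T$; then $\varphi$ permutes the weight spaces, fixes $(I\lb)_{0}=\lh$ and $(I\lb)_{\delta}=\lz(I\lb)$, and, respecting the bracket, permutes the $\alpha_{i}$ by an automorphism of the extended Dynkin diagram. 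Composing $\varphi$ with the corresponding lift, and then with the element of $\tilde T$ it prescribes on the one-dimensional weight spaces, one is reduced to an automorphism trivial on every one-dimensional weight space; the brackets $[\lg_{\alpha},(\lg_{\alpha})^{*}]$ and the $\lh$-action then make it trivial on $\lh^{*}$ and on $\lh$, hence the identity. Therefore $\Aut(I\lb)=\Aut(\text{ext.\ Dynkin})\ltimes G_{0}$.

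\emph{The main obstacle.} The derivation count is mechanical. The genuine difficulty is in the last step: that the ideal $\mathcal J$, defined through the $t$-grading which the diagram rotations manifestly do not respect, is nonetheless stable under all of $\Aut(\text{ext.\ Dynkin})$ --- because $S=\{\beta\in\tilde\Phi\cup\{0\}:0\le\beta\le\delta\}$ is --- and, dually, that an abstract automorphism of $I\lb$ cannot scramble $S$ beyond a diagram symmetry; the delicate point is that $\Aut(I\lb)^{\circ}$ does not stabilize the $\lh$-weight decomposition, so one must first move $\varphi$ into a torus normalizer.
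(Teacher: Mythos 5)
Your proposal is correct, and its skeleton matches the paper's: the same four families of automorphisms ($B\ltimes\lg/\lb$ acting adjointly, the translations by $\Hom(\lh,\lz(I\lb))\cong\Mc_r(\CC)$, the dilation $\CC^*$, and the extended Dynkin diagram symmetries lifted through the realization $I\lb\cong\lbt/t\lnt$), the same use of the abelianization $I\lb/[I\lb,I\lb]\cong\lh$ to separate the diagram symmetries from the connected part, and the same endgame (normalize the torus, recover a diagram symmetry from the indecomposable weights and the Cartan integers, then kill the automorphism on the simple root spaces). Two of your sub-arguments genuinely diverge. First, you identify $\Aut(I\lb)^\circ$ by the count $\dim H^1(I\lb,I\lb)=1+r^2$, whereas the paper never computes derivations: it proves the factorization $\Aut(I\lb)=\Gamma D\Ad(IB)U$ directly (Lemma~\ref{lem:Autgen}) and reads off the neutral component from the finiteness of $\Gamma$. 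Your route costs a weight-by-weight computation that you only sketch but that does close ($H^1$ is concentrated in weight zero, and the weight-zero analysis leaves exactly the scaling direction plus $\Hom(\lh,t\lh)$), and in exchange lets you use conjugacy of maximal tori in the now-known-to-be-normal $G_0$ rather than conjugacy of Cartan subalgebras of $I\lb$. Second, and more interestingly, you prove that the lifted diagram automorphism preserves $t\lnt$ by pure root combinatorics: $t\lnt$ is the sum of the root spaces over $\{\beta\in\tilde\Phi^+ : \delta-\beta\notin\tilde\Phi^+\cup\{0\}\}$, a set visibly stable under any permutation of $\tilde\Delta$ fixing $\delta$. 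The paper instead proves the semilinearity $\tilde\theta(tx)=\pm t\,\tilde\theta(x)$ (Lemma~\ref{Ct-lin}), a stronger statement it wants anyway for the quotients by $(t-\epsilon)\lnt$ with $\epsilon\neq 0$, and whose earlier claimed form ($\CC[t]$-linearity) it had to retract; for the $\epsilon=0$ case at hand your argument is cleaner and sidesteps that delicacy entirely. The only points to tighten are the unproved ``routine'' derivation computation and the $\mathfrak{sl}_2$ case, where the Cartan-integer criterion of Lemma~\ref{Ib2}(2) does not apply and must be handled separately, as the paper does in a footnote; neither is a real gap.
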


The details of how these subgroups act on $I\lb$ are given in
Section~\ref{sec:subgroups}. Section~\ref{sec:structAut} explain how
the semidirect products are formed. 

\begin{figure}
  \centering
  \begin{center}
  \begin{tabular}{|c|c|c|}
\hline
  \begin{tikzpicture}[scale=.7,baseline=(2.center)]
    \draw (0.5,1.4) node[anchor=north]  {$\tilde{A}_1$};

    \node[dnode, fill=black] (1) at (0,0) {};
    \node[dnode] (2) at (1,0) {};

    \path (2) edge[infedge] (1)
          ;
\end{tikzpicture}&
\begin{tikzpicture}[scale=.7,baseline=(2.center)]
\draw (2.5,2.4) node[anchor=north]  {$\tilde{A}_\bl\;(\bl\geq 2)$};
    \node[dnode] (1) at (0,0) {};
    \node[dnode] (2) at (1,0) {};
    \node[dnode] (3) at (2,0) {};
    \node[dnode] (4) at (3,0) {};
    \node[dnode] (5) at (4,0) {};
    \node[dnode] (6) at (5,0) {};
    \node[dnode,fill=black] (0) at (2.5,1) {};
    \path (1) edge[sedge,dashed] (2) edge[sedge] (0);
    \path (3) edge[sedge] (2) edge[sedge] (4);
     \path (6) edge[sedge] (5) edge[sedge] (0);
\path (4) edge[sedge] (5);
  \end{tikzpicture}
  &
  
  \begin{tikzpicture}[scale=.7,baseline=(3.center)]
\draw (2.5,2) node[anchor=north]  {$\tilde{B}_\bl\;(\bl\geq 3)$};
    \node[dnode] (1) at (0,0.7) {};
    \node[dnode] (2) at (1,0) {};
    \node[dnode] (3) at (2,0) {};
    \node[dnode] (4) at (3,0) {};
    \node[dnode] (5) at (4,0) {};
    \node[dnode] (6) at (5,0) {};
    \node[dnode,fill=black] (0) at (0,-0.7) {};
    \path (2) edge[sedge,dashed] (3) edge[sedge] (0) edge[sedge] (1);
    \path (4) edge[sedge] (3) edge[sedge] (5);
     \path (5) edge[dedge] (6);
  \end{tikzpicture}\\[4.5ex]

$\Aut(\Dt)=\ZZ/2\ZZ$&$\Aut(\Dt)=D_{(\bl+1)}$&$\Aut(\Dt)=\ZZ/2\ZZ$\\
\hline
\begin{tikzpicture} [scale=.7,baseline=(2.center)]
\draw (1,1.4) node[anchor=north]  {$\tilde{G}_2$};
    \node[dnode,fill=black] (1) at (0,0) {};
    \node[dnode] (2) at (1,0) {};
    \node[dnode] (3) at (2,0) {};
 
     \path (2) edge[tedge] (3);
\path (1) edge[sedge] (2);
  \end{tikzpicture}
   &

\begin{tikzpicture}[scale=.7,baseline=(2.center)]
    \draw (2.5,1.4) node[anchor=north]  {$\tilde{C}_\bl\;(\bl\geq 2)$};
\node[dnode,fill=black] (1) at (0,0) {};
    \node[dnode] (2) at (1,0) {};
    \node[dnode] (3) at (2,0) {};
    \node[dnode] (4) at (3,0) {};
    \node[dnode] (5) at (4,0) {};
    \node[dnode] (6) at (5,0) {};
    \path (1) edge[dedge] (2);
    \path (3) edge[sedge,dashed] (2) edge[sedge] (4);
     \path (6) edge[dedge] (5);
\path (4) edge[sedge] (5);
  \end{tikzpicture}
&
 \begin{tikzpicture}[scale=.7,baseline=(3.center)]
  \draw (2.5,2) node[anchor=north]  {$\tilde{D}_\bl\;(\bl\geq 5)$};
  \node[dnode] (1) at (0,0.7) {};
    \node[dnode] (2) at (1,0) {};
    \node[dnode] (3) at (2,0) {};
    \node[dnode] (4) at (3,0) {};
    \node[dnode] (5) at (4,0) {};
    \node[dnode] (6) at (5,0.7) {};
     \node[dnode] (7) at (5,-0.7) {};
    \node[dnode,fill=black] (0) at (0,-0.7) {};
    \path (2) edge[sedge,dashed] (3) edge[sedge] (0) edge[sedge] (1);
    \path (4) edge[sedge] (3) edge[sedge] (5);
     \path (5) edge[sedge] (6) edge[sedge] (7);
  \end{tikzpicture}
\\[3.6ex]
$\Aut(\Dt)$ is trivial&$\Aut(\Dt)=\ZZ/2\ZZ$&$\Aut(\Dt)=D_{(4)}$\\
\hline
\begin{tikzpicture}[scale=.7,baseline=(1.center)]
\draw (2,1.1) node[anchor=north]  {$\tilde{E}_6$};
    \node[dnode,fill=black] (0) at (0,0) {};
    \node[dnode] (1) at (1,0) {};
    \node[dnode] (2) at (2,0) {};
\node[dnode] (3) at (3,-0.7) {};
    \node[dnode] (4) at (3,0.7) {};
    \node[dnode] (5) at (4,-0.7) {};
    \node[dnode] (6) at (4,0.7) {};

    \path (1) edge[sedge] (0) edge[sedge] (2);
    \path (2) edge[sedge] (3) edge[sedge] (4);
     \path (5) edge[sedge] (3);\path (6) edge[sedge] (4);
  \end{tikzpicture}
&
\begin{tikzpicture} [scale=.7,baseline=(4.center)]
\draw (4,1.4) node[anchor=north]  {$\tilde{E}_7$};
    \node[dnode,fill=black] (1) at (0,0) {};
    \node[dnode] (2) at (1,0) {};
    \node[dnode] (3) at (2,0) {};
    \node[dnode] (4) at (3,0) {};
    \node[dnode] (5) at (4,0) {};
    \node[dnode] (6) at (5,0) {};
    \node[dnode] (7) at (6,0) {};
    \node[dnode] (0) at (3,1) {};
    \path (4) edge[sedge] (0) edge[sedge] (5) edge[sedge] (3);
    \path (2) edge[sedge] (1) edge[sedge] (3);
     \path (6) edge[sedge] (5) edge[sedge] (7);
  \end{tikzpicture}
&
\begin{tikzpicture} [scale=.7,baseline=(3.center)]
\draw (2.5,1.4) node[anchor=north]  {$\tilde{F}_4$};
    \node[dnode,fill=black] (1) at (0,0) {};
    \node[dnode] (2) at (1,0) {};
    \node[dnode] (3) at (2,0) {};
    \node[dnode] (4) at (3,0) {};
    \node[dnode] (5) at (4,0) {};
    
    \path (2) edge[sedge] (1) edge[sedge] (3);
   
     \path (3) edge[dedge] (4);
\path (4) edge[sedge] (5);
  \end{tikzpicture}\\[3ex]
$\Aut(\Dt)={\mathfrak S}_3$&$\Aut(\Dt)=\ZZ/2\ZZ$&$\Aut(\Dt)$ is trivial\\
\hline
 \begin{tikzpicture}[scale=.7,baseline=(3.center)]
  \draw (1,2) node[anchor=north]  {$\tilde{D}_4$};
  \node[dnode] (1) at (0,0.7) {};
    \node[dnode] (2) at (1,0) {};
    \node[dnode] (6) at (2,0.7) {};
     \node[dnode] (7) at (2,-0.7) {};
    \node[dnode,fill=black] (0) at (0,-0.7) {};
    \path (2)  edge[sedge] (0) edge[sedge] (1) edge[sedge] (6) edge[sedge] (7);
  \end{tikzpicture}
&
\multicolumn{2}{|c|}{
\begin{tikzpicture} [scale=.7]
\draw (2.5,1.4) node[anchor=north]  {$\tilde{E}_8$};
    \node[dnode,fill=black] (0) at (-1,0) {};
    \node[dnode] (1) at (0,0) {};
    \node[dnode] (2) at (1,0) {};
    \node[dnode] (3) at (2,0) {};
    \node[dnode] (4) at (3,0) {};
    \node[dnode] (5) at (4,0) {};
    \node[dnode] (6) at (5,0) {};
    \node[dnode] (7) at (6,0) {};
    \node[dnode] (8) at (4,1) {};
    \path (5) edge[sedge] (6) edge[sedge] (4) edge[sedge] (8);
    \path (3) edge[sedge] (4) edge[sedge] (2);
     \path (1) edge[sedge] (0) edge[sedge] (2);
     \path (6) edge[sedge] (7);
  \end{tikzpicture}}\\
$\Aut(\Dt)=\mathfrak S_4$  &\multicolumn{2}{|c|}{$\Aut(\Dt)$ is trivial}\\
\hline
\end{tabular}
  \end{center}
  \caption{Extended Dynkin diagrams and their automorphisms}
  \label{fig:extD}
\end{figure}

One of the amazing facts is that the extended Dynkin diagram of $\lg$
plays a crucial role in $\Aut(I\lb)$. 
On one hand, we explain this by constructing
the extended Cartan matrix of $\lg$ in terms of $I\lb$ in
Section~\ref{sec:roots}. 
On the other hand, this diagram is the Dynkin diagram
of the untwisted affine Lie algebra constructed from the loop algebra of $\lg$.
A second explanation is given by Theorem~\ref{th:main} that realizes $I\lb$
as a subquotient of the affine Lie algebra associated to $\lg$. 

More generally, $I\lb$ is a degeneration $\lim_{\epsilon\rightarrow0} \lg_+^{\epsilon}$ with  $\lg_+^{\epsilon}\cong\lg\oplus\lh$ for $\epsilon \in \CC\setminus \{0\}$. In Section \ref{sec2}, we explain how to interpret this degeneration in the affine Lie algebra setting. We also study the possible lifting of $\theta\in \Aut(\Dt)$ to $\Aut(\lg^{\epsilon}_+)$, see Section \ref{sec:AutDtilde}.

\bigskip
{\bf Link with other works.}
In \cite{KZ:Brauer}, Knutson and Zinn-Justin defined a degeneration
$\bullet$ of the standard associative product on $\mcM_n(\CC)$.
Let $\lb$ denote the set of upper triangular matrices. Identifying
the vector space $\mcM_n(\CC)$ with $\lb\times \mcM_n(\CC)/\lb$ in a natural way one gets 
$$
(R,L)\bullet (V,M)=(RV,RM+LV),
$$
for any $R,V\in\lb$ and $L,M\in \mcM_n(\CC)/\lb$.
The Lie algebra of the group $(\mcM_n(\CC),\bullet)^\times$ of
invertible elements of this algebra is $\lb\ltimes \mcM_n(\CC)/\lb$,
where the product is defined similarly to that of $I\lb$.
Note also that a cyclic automorphism (corresponding in our setting
to the cyclic automorphism of the extended Dynkin diagram of type
$A_{n-1}$ and with the unexpected cyclic automorphism of \cite{BNVDV})
appears in \cite{KZ:Brauer}. Moreover \cite[Proposition~2]{KZ:Brauer},
which realizes $(\mcM_n(\CC),\bullet)$ as a subquotient of $\mcM_n(\CC[t])$,
is similar to our Theorem~\ref{th:main}.

A generalization of $\overline{I\lb}$ is the following: fix a simple Lie algebra
$\lg$  and a parabolic subalgebra $\lp$ of $\lg$. Let $\ln_{\lp}^-(\cong \lg/\lp)$ be the nilradical of a parabolic subalgebra of $\lg$ opposite to $\lp$. Then $\lq_{\lp}:=\lp\ltimes \ln_{\lp}^-$ is also a degeneration of $\lg$. In the study of semi-invariants of $\lq_{\lp}$ some data linked with the extended Dynkin diagram also make appearance in \cite[Theorem~5.5]{Ya} (Borel case) and in \cite[Proposition~5.2.1]{Ph} (general case). In type $A_{n-1}$, standard parabolics are characterized by an ordered partition $\bolda=(\lambda_1,\dots,\lambda_k)$ of $n$. Transforming $\bolda$ into $\boldmu:=(\lambda_k,\lambda_1,\dots,\lambda_{k-1})$, the cyclic action of $\ZZ/n\ZZ$ coming from the symmetries of the extended Dynkin diagrams described in \cite{BNVDV} allows to write $\lq_{\lp_{\bolda}}\cong \lq_{\lp_{\boldmu}}$. 
This explains many symmetries noted in \cite{Ph}, see (3.9) in \textit{loc. cit.}.

\bigskip
{\bf Motivation and story of the present work.}
In \cite{BNVDV}, Bar-Natan and van der Veen constructed an ``unexpected'' cyclic
automorphism of $I\lb$ when $\lg=\gl_n(\CC)$. The first version of
this work was an explanation of this automorphism by using affine Lie
algebras. Simultaneously with this first version, A.~Knutson mentioned
to Bar-Natan his earlier work \cite{KZ:Brauer} with Zinn-Justin.

\bigskip
{\bf Acknowledgements.} We are very grateful to Dror Bar Natan for
useful discussions that had motivated this work.
The authors are partially supported by the French National Agency
(Project GeoLie ANR-15-CE40-0012).

\section{The Lie algebras $I\lb$, $\lg^\epsilon_+$ and $\lg\otimes
  \CC[t^{\pm 1}]$}

\label{sec2}

\subsection{Definitions of $I\lb$ and $\lg^\epsilon_+$}

\label{sec:defIb}

Let $\lg$ be a complex simple Lie algebra with Lie bracket denoted by
$[\ ,\ ]$. 
Fix a Borel subalgebra $\lb$ of $\lg$ and a Cartan subalgebra
$\lh\subset\lb$. 
Let $\lb^-$ be the 
Borel subalgebra of $\lg$ containing $\lh$ which is opposite to $\lb$.
Set $\vV=\lb\oplus\lb^-$ viewed as a vector space. 
In this section, we define the Lie bracket $[\ ,\ ]_\epsilon$ on $\vV$
depending on the complex parameter $\epsilon$, interpolating
between $I\lb$ and  the direct product $\lg\oplus\lh$.

Let $\ln$ and $\ln^-$ denote the derived subalgebras of $\lb$ and
$\lb^-$ respectively. 
Fix $\epsilon\in \CC$. 
Define the skew-symmetric bilinear 
bracket $[\ ,\ ]_\epsilon$ on $\vV$ by

$$
\begin{array}{l@{\,}lll}
[x,x']_\epsilon  &=[x,x']&\forall x,x'\in\lb\\[0pt] 
[y,y']_\epsilon  &=\epsilon [y,y']&\forall y,y'\in\lb^-\\[6pt] 
[x,y]_\epsilon&=(\epsilon X+ \epsilon \frac H 2,\frac H 2+Y)&\forall
                                          x\in\lb\;y\in\lb^-&\mbox{where }[x,y]=X+H+Y\in\ln\oplus\lh\oplus\ln^-
\end{array}
$$

Then $[\ ,\ ]_\epsilon$ satisfies the Jacobi identity (see discussion
after \eqref{eq:g1product} for a proof).
Endowed with this Lie bracket, $\vV$ is denoted by
$\lg^\epsilon_+$. 
The linear
  map

$$
\begin{array}{cccl@{\hspace{1cm}}l}
  \varphi_\epsilon\,:&\lb\oplus\lb^-&\longto&\lb\oplus\lb^-\\
&(x,y)&\longmapsto&(x,\epsilon y)&{\rm for\ any\ }x\in\lb,\,y\in\lb^-
\end{array}
$$
allows to interpret $\lg^\epsilon_+$ as an Inönü-Wigner contraction
\cite{IW} of
$\lg^1_+$. Indeed, for any nonzero $\epsilon$, we have 
\begin{equation}
  \label{eq:4}
  [X,Y]_\epsilon=\varphi_\epsilon\inv([\varphi_\epsilon(X),
  \varphi_\epsilon(Y)]_1)\qquad\forall X,Y\in\vV.
\end{equation}

We now describe  $\lg^1_+$. Using the triangular decomposition
\begin{equation}
  \label{eq:td}
  \lg=\ln\oplus\lh\oplus\ln^-,
\end{equation}
one defines the injective linear map
$$
\begin{array}{cccl}
  \iota^1_\lg\,:&\lg=\ln\oplus\lh\oplus\ln^-&\longto&\lg^1_+\\
&(\xi,\alpha,\zeta)&\longmapsto&(\xi+\frac \alpha 2,\frac \alpha 2  +\zeta)
\end{array}
$$  
and checks that it is a Lie algebra homomorphism whose image is an ideal of $\lg^1_+$. 
Moreover, the image of
$$
\begin{array}{cccl}
  \iota^1_\lh\,:&\lh&\longto&\lg^1_+\\
&\alpha&\longmapsto&(-\alpha, \alpha)
\end{array}
$$
is the center of $\lg^1_+$ and, as Lie algebras,
\begin{equation}
  \label{eq:g1product}
  \lg^1_+=\iota_\lg^1(\lg)\oplus\iota_\lh^1(\lh).
\end{equation}

Observe that we never used the Jacobi identity for $[\ ,\ ]_1$ to prove
the isomorphism~\eqref{eq:g1product}. Hence, we can deduce from it that
$[\ ,\ ]_1$ satisfies the Jacobi identity. Then, the expression~\eqref{eq:4} 
implies that $[\ ,\ ]_\epsilon$ satisfies the Jacobi identity
for any nonzero $\epsilon$. Since this property is closed on the space of bilinear maps,
it is satisfied by $[\ ,\ ]_0$ too.

\bigskip
Consider now $I\lb$ with its Lie bracket $[\;,\;]_{I\lb}$  defined as follows:
$\lb^*$ is an abelian ideal on which $\lb$ acts by the coadjoint action.
Denote by
$\kappa\,:\,\lg\,\longto\,\lg^*$ the Killing form on $\lg$.
Since the orthogonal complement of $\lb$ with respect to $\kappa$ is $\ln$, $\lb^*$
identifies with $\lg/\ln$ as a $\lb$-module. 
Identify $\lg/\ln$ with $\lb^-$ in a canonical way
(that is by $y\in\lb^-\longmapsto y+\ln$) and denote by
$\pi\,:\,\lg\,\longto\, \lb^-$ the quotient map.
Then $Ib=\lb\oplus\lb^*$ identifies with $\lb\oplus\lb^-=\vV$.
Let $[\;,\;]_I$ denote the Lie bracket transferred to  $\vV$ from $[\;,\;]_{I\lb}$.  
 Let 
$x,x'\in \lb$ and $y,y'\in \lb^-$ and decompose $[x,y']-[x',y]$ as
$X+H+Y$ with respect to $\lg=\ln\oplus\lh\oplus\ln^-$. Then
 
\begin{equation}
  \label{eq:2I}
  [(x,y),(x',y')]_I=([x,x'],H+Y).
\end{equation}

\bigskip
We now describe  $\lg^0_+$.
The Lie bracket $[\ ,\ ]_0$ on $\vV=\lg^0_+$ is given by
\begin{equation}
  \label{eq:20}
  [(x,y), (x',y')]_0=([x,x'],\frac H 2+Y).
\end{equation}
Comparing~\eqref{eq:2I} and \eqref{eq:20}, one gets that the following
linear map $\eta$ is a Lie algebra isomorphism between $\lg^0_+$ and $I\lb$:
$$
\begin{array}{cccc}
  \eta\,:&\vV=\lb\oplus(\lh\oplus\ln^-)&\longto&\lb\oplus\lb^*=I\lb\\
&(x,h,y)&\longmapsto&(x,\kappa(2 h +y,\square)).
\end{array}
$$

  Replacing $\lb^-$ and $\lb^*$ by $\ln^-$ and $\ln^*$ respectively,
  one defines $\lg^\epsilon$ and one gets the isomorphsims
  $\lg\simeq\lg^\epsilon$ (for any $\epsilon\neq 0$) and $\lg^0\simeq\oIb$.

\subsection{The affine Kac-Moody Lie algebra}
\label{sec:affLA}


The untwisted affine Kac-Moody Lie
algebra $\lgkm$ is constructed from  the simple Lie algebra $\lg$. We refer to \cite[Chapters I and XIII]{Ku} for the basic properties of $\lgkm$. 
Denote by $\lz(\lgkm)$ the one dimensional center of $\lgkm$.
Consider the  Borel subalgebra $\lbkm$ of $\lgkm$ and its derived subalgebra $\lnkm$.
By killing the semi-direct product and the central extension from the
construction of $\lgkm$, one gets
$$
\begin{array}{rcl}
  \lgt&:=&[\lgkm,\lgkm]/\lz(\lgkm)\\
&\cong&\CC[t^{\pm 1}] \otimes \lg,
\end{array}
$$
and
$$
\begin{array}{rcl}
  \lbt&:=&(\lbkm\cap [\lgkm,\lgkm])/\lz(\lgkm)\subset \lgt\\
\lnt&:=&(\lnkm\cap [\lgkm,\lgkm])/\lz(\lgkm)=[\lbt,\lbt].
\end{array}
$$
Identify $\lg$ with the subspace $\CC\otimes\lg\subset \lgt$. 
Note that $\lgkm/\lz(\lgkm)=\tilde\lg+\CC d$ where $d$ acts as the derivation with respect to $t$.

We consider the set of (positive) roots $\Phi^{(+)}$ (resp. $\tilde \Phi^{(+)}$)  of $\lg$ (resp. $\lgkm$) and the set of simple roots $\Delta$ (resp. $\tilde\Delta$) with respect to $\lh\subset \lb\subset \lg$ (resp. $\lh+\CC d+\lz(\lgkm)\subset \lbkm\subset\lgkm$).
%
We recall the following classical facts: 
\[\ln^{KM}\cong \lnt=\bigoplus_{\alpha\in \tilde \Phi^+} \lgt_{\alpha}\]
where $\lgt_{\alpha}\cong \lg^{KM}_{\alpha}$ is the root space
associated to $\alpha$. Moreover, $\lnt$ is generated, as a Lie algebra by the subspaces $(\lgt_{\alpha})_{\alpha\in \tilde \Delta}$.
The identification of $\Delta$ with $\{\alpha\in \tilde \Delta\,| \,\alpha(d)=0\}$ yields the above-described embedding $\lg\subset \lgt$. 
Denoting by $\delta$ the indivisible positive imaginary root in $\tilde \Phi$, we have
\[\tilde \Phi=\{n\delta+\alpha\,|\, \alpha\in \Phi\cup\{0\}, n\in \ZZ\}\setminus \{0\}\]
\[\tilde \Delta=\Delta\cup\{\alpha_0+\delta\}\]
where $\alpha_0$ is the lowest root of $\Phi$. Note that $\lgt_{n\delta}=t^n\lh$ ($n\in \ZZ$), using the notation $\lgt_{0}:=\lh$. 

Finally, the extended Dynkin diagram can be reconstructed from the combinatorics of $\tilde\Delta$ in $\tilde \Phi$. Indeed, the nodes correspond to the elements of $\tilde \Delta$ and the non-diagonal entries $a_{\alpha,\beta}$ of the generalized Cartan matrix (encoding the arrows of the diagram) are $a_{\alpha,\beta}=-\max \{n\in \NN| \beta+n\alpha \in \tilde \Phi\}$ by Serre relations.

We list in Figure~\ref{fig:extD} the extended Dynkin diagram
$\tilde{\mathcal D}_{\lg}$ in each simple type. The black node
corresponds to the simple root $\alpha_0+\delta$. We also provide the
automorphism group of $\tilde{\mathcal D}_{\lg}$. Note that by the
definition of $\lgkm$ given in \cite[\S1.1]{Ku}, any
$\theta\in\Aut(\tilde{\mathcal D}_{\lg})$ provides an automorphism
$\theta^{KM}\in \Aut(\lgkm)$ stabilizing both $\lh+\CC d+\lz(\lgkm)$ and $\lbkm$ \mb{and permuting the generators $e_{\alpha}, f_{\alpha}$ ($\alpha\in \tilde\Delta$) via $\theta^{KM}(e_{\alpha})=e_{\theta(\alpha)}$ and $\theta^{KM}( f_{\alpha}) =f_{\theta(\alpha)}$}. Since $\lz(\lgkm)$ and $[\lgkm,\lgkm]$ are characteristic in $\lgkm$, \emph{i.e.} stabilized by any automorphism of Lie algebra, this yields an automorphism $\tilde \theta\in \Aut(\lgt)$. \mb{Note that some choices have to be made for $\theta^{KM}(d)$, but the automorphism $\tilde \theta$ only depends on $\theta$ and on the $e_{\alpha}$, $f_{\alpha}$ ($\alpha\in \tilde\Delta$), since those elements generate $\lgt$.}

In a first version of this paper, it was claimed that $\tilde\theta$ is $\CC[t]$-linear. In fact, it is unclear whether this result holds in general. However, we have the following
\begin{lemma} \label{Ct-lin} \mb{Under above notation, there exists $\lambda\in \{\pm1\}$ such that
\[\forall x\in \lgt, \, \tilde \theta(tx)=\lambda t\tilde \theta(x).\]
In particular, the automorphism $\tilde\theta\in \Aut(\lgt)$ stabilizes $t\lnt$.\\
Moreover, $\lambda=1$ whenever the order of $\theta$ is odd.} 
\end{lemma}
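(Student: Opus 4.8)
The essential point is that $\tilde\theta$ preserves the grading of $\lgt \cong \CC[t^{\pm1}]\otimes\lg$ up to a scalar. First I would record that $\lgt$ carries a $\ZZ$-grading by the eigenvalues of $\ad$ of a suitable element: concretely, the imaginary root spaces $\lgt_{n\delta} = t^n\lh$ and more generally the ``degree $n$'' part $t^n\lg = \bigoplus_{\alpha\in\Phi\cup\{0\}}\lgt_{n\delta+\alpha}$ together span $\lgt$. Since $\tilde\theta$ comes from an automorphism of $\lgkm$ stabilising $\lh+\CC d+\lz(\lgkm)$ and $\lbkm$, it stabilises $\tilde\Delta$ as a set (via $\theta$), hence it permutes all root spaces $\lgt_\alpha$ according to a linear bijection of $\tilde\Phi$ that extends $\theta$ on $\tilde\Delta$. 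In particular $\tilde\theta(\lgt_\delta)$ is a one-dimensional root space $\lgt_{\tilde\theta(\delta)}$; since $\tilde\theta$ permutes imaginary roots among themselves (these are intrinsically characterised, e.g. as the $\alpha$ with $\dim\lgt_\alpha = r$, or via the invariant bilinear form / the null subspace of the Tits cone picture) and preserves positivity of roots in $\tilde\Phi^+$ because it stabilises $\lnt$, we must have $\tilde\theta(\delta)=\delta$.

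**From $\tilde\theta(\delta)=\delta$ to the scalar $\lambda$.** Once $\tilde\theta$ fixes $\delta$, it follows that $\tilde\theta(\lgt_{n\delta+\alpha}) = \lgt_{n\delta+\theta(\alpha)}$ for every $n\in\ZZ$ and $\alpha\in\Phi\cup\{0\}$ — that is, $\tilde\theta$ is homogeneous of degree $0$ for the $t$-grading, mapping $t^n\lg$ to $t^n\lg$. Writing the restriction to the degree-$1$ piece as a linear map $t\lg \to t\lg$, the multiplication-by-$t$ operator $m_t\colon t^n\lg\to t^{n+1}\lg$ is determined by the bracket structure (e.g. $[\,t x_\alpha, t^0\lh\,]$ lands in $t\lg$), and $\tilde\theta$ intertwines $m_t$ up to a unit because both $\tilde\theta\circ m_t$ and $m_t\circ\tilde\theta$ are degree-raising Lie-algebra maps agreeing with $\tilde\theta$ on each graded piece — so $\tilde\theta(tx) = \lambda\, t\,\tilde\theta(x)$ for a single scalar $\lambda\in\CC^*$ independent of $x$. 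To see $\lambda\in\{\pm1\}$: apply $\tilde\theta$ to $t^0$-elements repeatedly, or simpler, note $\tilde\theta$ has finite order $N$ (it is induced by $\theta$ of finite order), so iterating gives $t^N x$ comparison forcing $\lambda^N = 1$ is not yet enough; instead use that $\tilde\theta$ permutes the finitely many $\lgt_{n\delta}=t^n\lh$ and sends a chosen root vector to another root vector — the cleanest argument is that $\tilde\theta$ is defined over the $\QQ$-form (or $\ZZ$-form) generated by the $e_\alpha,f_\alpha$, $\alpha\in\tilde\Delta$, on which $t$ acts integrally, so $\lambda$ and $\lambda^{-1}$ are both algebraic integers that are units, hence roots of unity, and being a rational number (the $e_\alpha,f_\alpha$ generate over $\QQ$) it is $\pm1$.

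**The odd-order refinement.** If $\theta$ has odd order $N$, then $\tilde\theta^N = \Id$ on $\lgt$ (the automorphism $\tilde\theta$ is built from $\theta^{KM}$ which permutes the Chevalley generators exactly as $\theta$ does, and $\tilde\theta$ depends only on $\theta$ and the $e_\alpha,f_\alpha$, so $\tilde\theta^N$ permutes these generators by $\theta^N=\mathrm{id}$, hence is the identity). Applying the relation $\tilde\theta(tx)=\lambda t\tilde\theta(x)$ exactly $N$ times gives $x = \tilde\theta^N(t^N x)\cdot t^{-N}$-type bookkeeping, precisely $t^N x = \tilde\theta^N(t^N x) = \lambda^N t^N \tilde\theta^N(x) = \lambda^N t^N x$, so $\lambda^N = 1$; combined with $\lambda=\pm1$ and $N$ odd this forces $\lambda=1$.

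**Conclusion and main obstacle.** Finally, $t\lnt = \bigoplus_{n\geq 1,\ \alpha\in\Phi\cup\{0\}}\lgt_{n\delta+\alpha}\ \oplus\ (\text{positive-root part in degree }0)$ — more precisely $t\lnt$ is spanned by $t^n(\text{root/Cartan spaces})$ for $n\geq 1$ together with $t\cdot\ln$; since $\tilde\theta$ preserves the $t$-degree (up to the harmless sign, which does not move $t^n$ to $t^{-n}$), preserves $\lnt$, and satisfies the displayed identity, it stabilises $t\lnt$. The step I expect to be the genuine obstacle is pinning down that $\tilde\theta(\delta)=\delta$ rather than $\tilde\theta(\delta)=-\delta$ or some other imaginary root: this is exactly the subtlety behind the erratum ("it is unclear whether $\tilde\theta$ is $\CC[t]$-linear"), and it must be handled by carefully combining the facts that $\tilde\theta$ stabilises $\lnt = \bigoplus_{\alpha\in\tilde\Phi^+}\lgt_\alpha$ (so it preserves positivity, giving $\tilde\theta(\delta)$ a positive imaginary root, i.e. $n\delta$ with $n\geq1$) with the fact that $\tilde\theta$ is bijective on the locally finite part and $\delta$ is the indivisible positive imaginary root (so $n=1$) — while the possibility that the induced map on the larger algebra $\lgkm/\lz \supset \lgt$ sends $d\mapsto -d + (\cdots)$, which is what produces $\lambda=-1$ in even order, has to be tracked through the quotient.
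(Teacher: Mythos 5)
Your overall skeleton matches the paper's: fix $\delta$, deduce that $\tilde\theta$ respects the $t$-grading, extract a scalar $\lambda$, show $\lambda\in\{\pm1\}$, and handle the odd-order case by iterating the relation $m$ times (that last step is exactly the paper's argument). But there is a genuine gap at the central step, and you have misidentified where the difficulty lies. Knowing that $\tilde\theta(\lgt_{n\delta+\alpha})=\lgt_{n\delta+\theta(\alpha)}$ only tells you that on each \emph{one-dimensional} real root space the operator $\Psi:=\tilde\theta^{-1}\circ t^{-1}\circ\tilde\theta\circ t$ acts by some scalar $\lambda_\alpha$; it gives you nothing on the $r$-dimensional imaginary root spaces $t^n\lh$, where $\Psi$ need not a priori be scalar at all, and it certainly does not show that all the $\lambda_\alpha$ coincide. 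Your justification --- that $\tilde\theta\circ m_t$ and $m_t\circ\tilde\theta$ are ``degree-raising Lie-algebra maps agreeing with $\tilde\theta$ on each graded piece'' --- is not an argument; it restates the conclusion. The paper's proof spends most of its length precisely here: it establishes the intertwining identity $\Psi([x_\alpha,x_\beta])=[\Psi(x_\alpha),x_\beta]$ from $\CC[t]$-bilinearity of the bracket, uses it with $\alpha=0$ to show $\Psi_0$ induces on $\lh^*$ a map sending each $\beta$ to $\lambda_\beta\beta$, invokes connectedness of the Dynkin diagram (via the eigenvector $\beta+\gamma$ for adjacent simple roots) to equalize the $\lambda_\beta$, and then propagates to the imaginary root spaces using that $t^n[\lgt_\alpha,\lgt_{-\alpha}]$, $\alpha\in\Delta$, generate $\lgt_{n\delta}$. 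None of this is in your proposal, and without it the statement ``a single scalar $\lambda$ independent of $x$'' is unproved. (By contrast, the step you flag as the ``main obstacle'', namely $\tilde\theta(\delta)=\delta$, is dispatched in one line in the paper: $\tilde\theta$ stabilizes the semigroup $\NN^*\delta$ of positive imaginary roots, hence fixes its generator.)

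For $\lambda\in\{\pm1\}$ you propose a route different from the paper's: rationality and integrality of $\lambda$ via a $\QQ$- or $\ZZ$-form preserved by $\tilde\theta$ on which $m_t$ acts integrally. The paper instead conjugates the identity $t^{-1}\tilde\theta t=\lambda\tilde\theta$ by the Cartan involution $\omega$ of $\lgt$, which satisfies $\omega t\omega=t^{-1}$ and $\omega\tilde\theta\omega=\tilde\theta$ (checked on the generators $e_\alpha,f_\alpha$), yielding $t\tilde\theta t^{-1}=\lambda\tilde\theta$ and hence $\lambda^2=1$. Your alternative is plausible but under-justified: you would need to exhibit the $\ZZ$-form and verify both that $\tilde\theta$ and multiplication by $t$ preserve it, and your appeal to ``unit algebraic integer, hence root of unity'' is not a valid implication (that is Kronecker's theorem, which needs all archimedean absolute values to be $1$); the salvageable version is simply that a rational number which is a unit in the algebraic integers equals $\pm1$. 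I would recommend adopting the Cartan involution argument, which is shorter and self-contained.
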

\begin{proof}
Note that, since $\tilde{\theta}$ comes from an element $\theta^{\rm KM}\in \Aut(\lgkm)$, its action on the semi-group $\tilde\Phi^+$ stabilizes the semi-group of positive imaginary roots $\NN^*\delta$ and thus fixes its generator $\delta$. In particular, in the additive group $\tilde\Phi\cup \{0\}$, we have $\tilde \theta(\cdot+\delta)=\delta+\tilde \theta(\cdot)$. Defining $\Psi$ on $\lgt$ via $\Psi(x)=\tilde\theta^{-1}(t^{-1}\tilde \theta(tx))$, we thus get that $\Psi_{\alpha}:=\Psi_{|\lgt_{\alpha}}$ is an invertible linear map on $\lgt_{\alpha}$ for any $\alpha\in \tilde\Phi\cup \{0\}$. Since $\dim \lgt_{\alpha}=1$ for $\alpha\in \tilde \Phi\setminus\ZZ\delta$, we can thus define $\lambda_{\alpha}$ as the element of $\CC^{\times}$  such that $\Psi_{\alpha}=\lambda_{\alpha}Id_{\lgt_{\alpha}}$.

Let $\alpha,\beta\in \tilde \Phi\cup \{0\}$, $x_{\alpha}\in \lgt_{\alpha}$, $x_{\beta}\in \lgt_{\beta}$. By $\CC[t]$-bilinearity of the bracket, we get \begin{equation}\Psi_{\alpha+\beta}([x_{\alpha},x_{\beta}])=\tilde \theta^{-1}(t^{-1}[\tilde \theta(tx_{\alpha}),\tilde \theta (x_{\beta})])=[\Psi_{\alpha}(x_{\alpha}),x_{\beta}].
\label{eq_Psi}\end{equation}

For $\alpha=0$, $x_{\alpha}=h\in \lh$ and $\beta\in \tilde \Phi\setminus\ZZ\delta$, we get \begin{equation}\lambda_{\beta}\beta(h)x_{\beta}=\Psi_{\beta}(\beta(h)x_{\beta})\stackrel{\eqref{eq_Psi}}=\beta(\Psi_0(h))x_{\beta}\label{eq_Psi0}.\end{equation} In particular, $\Psi_0$ induces on $\lh^*$ a linear map ${}^t\Psi_0$ sending $\beta$ to $\lambda_{\beta}\beta$ for each $\beta\in \Phi\subset \tilde \Phi\setminus\ZZ\delta$. If $\beta,\gamma\in \Delta$ correspond to connected nodes of the Dynkin diagram of $\lg$, then $\beta, \gamma$ and $\beta+\gamma$ are eigenvectors of ${}^t\Psi_0$ so $\lambda_{\beta}=\lambda_{\gamma}$. By connexity of the Dynkin diagram, we get that the $\lambda_{\beta}$ ($\beta\in \Delta$) are all equal to a single value $\lambda$. Since $\Delta$ generates $\lh^*$, we get $\Psi_0=\lambda Id_{\lgt_0}$.

For any $\beta\in \tilde\Phi\setminus\ZZ\delta$, we can choose $h\in \lh$ such that $\beta(h)\neq0$. Applying \eqref{eq_Psi0} yields $\lambda_{\beta}\beta(h)x_{\beta}=\beta(\lambda h) x_{\beta}$, that is $\lambda_{\beta}=\lambda$. 

When $\alpha=-\beta\in \Delta$, $n\in \ZZ$, we get $\Psi_{n\delta}(t^n[x_{\alpha},x_{-\alpha}])\stackrel{\eqref{eq_Psi}}=[\Psi_{\alpha}(x_{\alpha}), t^nx_{-\alpha}]=\lambda t^n [x_{\alpha},x_{-\alpha}]$. Since the $t^n[\lgt_{\alpha},\lgt_{-\alpha}]$ ($\alpha\in \Delta$) generate $\lgt_{n\delta}$, this yields $\Psi_{n\delta}=\lambda Id_{\lgt_{n\delta}}$. Finally, we have proved that $\Psi=\lambda Id_{\lgt}$ and this yields the first assertion of the Lemma.

\mb{Let $m$ be the order of $\theta$. The first assertion of the lemma can be rewritten as $t^{-1}\tilde\theta t=\lambda\tilde \theta$ where $t^{\pm1}$ denotes the multiplication by $t^{\pm1}$ in $\lgt$. This identity to the power $m$ yields $\lambda^m=1$.}

In the setting of \cite[Chapter XIII]{Ku}, the Cartan involution $\omega$ of $\lgt$ sending each generator $e_{\alpha}$ ($\alpha\in \tilde \Delta$) to $-f_{\alpha}$ is given by 
\[\omega(t^ix)=t^{-i}\mathring\omega(x)\qquad  (i\in \ZZ, x\in \lg) \]
where $\mathring \omega$ is the Cartan involution of $\lg$. As a consequence, $\omega t=t^{-1}$. Also, $\omega \circ \tilde\theta\circ \omega (e_{\alpha})=\omega\circ  \tilde\theta(-f_{\alpha})=-\omega(f_{\theta(\alpha)})=e_{\theta(\alpha)}=\tilde\theta(e_{\alpha})$ and the same computation gives $\omega \circ \tilde\theta\circ \omega (f_{\alpha})=\tilde\theta(f_{\alpha})$ so $\omega \tilde\theta \omega=\tilde\theta$.
Then conjugating $t^{-1}\tilde\theta t=\lambda\tilde \theta$ by the involution $\omega$ yields $t\tilde\theta t^{-1}=\lambda\tilde \theta$. It follows from these equalities that $\lambda^2=1$.  \mb{Hence $\lambda\in \{\pm1\}$ with $\lambda=1$ if $m$ is odd.}

\mb{Finally, $\tilde\theta$ permutes the generators of $\lnt$: $(e_{\alpha})_{\alpha\in \tilde\Delta}$. Hence $\tilde\theta$ stabilizes $\lnt$ and $\tilde\theta(t\lnt)=\pm t\lnt=t\lnt$}

\end{proof}
\begin{remark} 
We also checked in several cases, including the cyclic automorphism in type A, that $\lambda=1$.
In such cases, $\tilde \theta$ then also stabilizes $(t-\epsilon)\lnt$ for any $\epsilon\in \CC$.
\end{remark}

\subsection{Realization of $\lg^\epsilon_+$}
\label{sec:KM_interpret}

The Lie  algebras $\lbt$ and $\lnt$
 decompose as
$$
\begin{array}{ccl}
  \lbt&=&\CC[t]\lb\oplus t\CC[t]\ln^-,\\
\lnt&=&\CC[t]\ln\oplus t\CC[t]\lb^-.
\end{array}
$$ 
Moreover, $(t-\epsilon)\lnt$ is an ideal of $\lbt$, and
$\lbt/((t-\epsilon)\lnt)$ is a Lie algebra. 

\begin{theo} \label{th:main} 
Let $\epsilon\in\CC$. The Lie algebras $\lg^\epsilon_+$ and
$\lbt/(t-\epsilon)\lnt$ are isomorphic.
Similarly, $\lg^{\epsilon}$ is isomorphic to $\lbt/(t-\epsilon)\lbt$.
\end{theo}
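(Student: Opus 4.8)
The plan is to produce an explicit linear isomorphism $\vV = \lb\oplus\lb^- \longrightarrow \lbt/(t-\epsilon)\lnt$ and verify it intertwines $[\ ,\ ]_\epsilon$ with the bracket induced from $\lbt$. First I would use the given decomposition $\lbt = \CC[t]\lb \oplus t\CC[t]\ln^-$ together with $\lnt = \CC[t]\ln \oplus t\CC[t]\lb^-$ to understand the quotient: since $(t-\epsilon)\lnt$ contains $(t-\epsilon)\CC[t]\ln$ and $(t-\epsilon)t\CC[t]\lb^-$, modulo this ideal every element of $\CC[t]\lb$ reduces (using $t\equiv\epsilon$) to $\lb$-part plus, carefully, contributions in $\ln$; and $t\CC[t]\ln^-$ reduces to $t\ln^-\equiv$ (a representative of) $\ln^-$. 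So as a vector space $\lbt/(t-\epsilon)\lnt$ should be spanned by (the classes of) $\lb$ and $t\ln^-$, giving the right dimension $\dim\lb+\dim\ln^- = \dim\vV$. The subtle point is that $\CC[t]\lh$ and $t\CC[t]\lh$ both appear and one must track how the Cartan part survives: $t\CC[t]\ln^-$ is a summand of $\lnt$ only for the $\ln^-$-part, while the $\lh$-part of $\lbt$ sits inside $\CC[t]\lb$, so modulo $(t-\epsilon)\lnt$ one gets exactly one copy of $\lh$ (from $\lb$) and one copy of $t\ln^-$.

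Concretely I would define $\Theta\colon \lb\oplus\lb^- \to \lbt/(t-\epsilon)\lnt$ by sending $x\in\lb$ to the class of $x$ (as a constant loop, i.e. $1\otimes x$) and $y = H/2 + Y \in \lh\oplus\ln^-$ — wait, more precisely, given $(x,y)\in\lb\oplus\lb^-$ with $y$ decomposed along $\lh\oplus\ln^-$, send it to the class of $1\otimes x + c\,(1\otimes H) + t\otimes Y$ for an appropriate normalization constant $c$ (likely involving $\epsilon$ and the factor $1/2$ appearing in the formula for $[\ ,\ ]_\epsilon$), chosen so the Cartan coefficients match. Then I would compute the bracket in $\lbt$ of two such representatives. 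The three defining relations for $[\ ,\ ]_\epsilon$ correspond to: (i) $[1\otimes x, 1\otimes x'] = 1\otimes[x,x']$ for $x,x'\in\lb$, matching $[x,x']_\epsilon = [x,x']$; (ii) $[t\otimes Y, t\otimes Y'] = t^2\otimes[Y,Y'] \equiv \epsilon\, t\otimes[Y,Y'] \pmod{(t-\epsilon)\lnt}$ since $t^2 - \epsilon t = (t-\epsilon)t \in (t-\epsilon)\lnt$ (as $[Y,Y']\in\ln^-$ and $t\ln^-\subset\lnt$), matching $[y,y']_\epsilon = \epsilon[y,y']$ on the $\ln^-$-part; and (iii) the mixed bracket $[1\otimes x, t\otimes y']$, which produces $t\otimes[x,y'] = t\otimes(X+H+Y)$; here $t\otimes X \equiv \epsilon\otimes X \pmod{(t-\epsilon)\lnt}$ (since $X\in\ln$, $\CC[t]\ln\subset\lnt$), $t\otimes H$ stays, and $t\otimes Y$ stays — and after applying $\Theta^{-1}$ this should reproduce exactly $(\epsilon X + \epsilon H/2,\ H/2 + Y)$, which fixes the normalization constant $c$ and the precise form of $\Theta$.

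The main obstacle I expect is bookkeeping the Cartan/$\epsilon$-normalization so that relation (iii) comes out with the exact coefficients $\epsilon X + \epsilon H/2$ on the $\lb$-side and $H/2 + Y$ on the $\lb^-$-side; this is where the factor $1/2$ and a possible rescaling $t\otimes Y \leftrightarrow \epsilon^{-1}(\cdots)$ must be reconciled, and one has to be careful that the map is well-defined (independent of chosen representatives) and bijective. A clean way to organize this: first treat $\epsilon\neq 0$, where one can alternatively transport via $\varphi_\epsilon$ and the Inönü–Wigner relation \eqref{eq:4} to reduce to the case $\epsilon=1$, handle $\epsilon=1$ by hand using the splitting \eqref{eq:g1product}, and then deduce $\epsilon=0$ by a degeneration/closedness argument (the isomorphism class is constant in a flat family, or simply check $\epsilon=0$ directly since the formulas \eqref{eq:20} are explicit). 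For the second statement, replacing $\ln^-$ by its role: $\lbt/(t-\epsilon)\lbt$ — note $\lbt = \CC[t]\lb\oplus t\CC[t]\ln^-$, so modulo $(t-\epsilon)\lbt$ one kills $(t-\epsilon)\CC[t]\lb$ and $(t-\epsilon)t\CC[t]\ln^-$, leaving classes of $\lb$ and $t\ln^-$ again but now with the $\lh$-part of $t\ln^-$ — rather $t\ln^-$ has no $\lh$-part — giving $\dim\lb + \dim\ln^- = \dim(\lb\oplus\ln^*)$; the same computation as above, with $\lb^-$ replaced by $\ln^-$ throughout, shows $\lbt/(t-\epsilon)\lbt \cong \lg^\epsilon$, and composing with the isomorphism $\lg^0\cong\oIb$ recorded earlier gives the $\epsilon=0$ case. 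I would present the $\lb\ltimes\ln^*$ case as a corollary of the computation, remarking that one simply quotients by the center $\iota^1_\lh(\lh)$ on both sides.
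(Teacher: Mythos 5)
Your overall strategy (write down an explicit linear map $\vV\to\lbt/(t-\epsilon)\lnt$ and check it intertwines the brackets) is legitimate --- the paper even remarks at the end of its proof that this direct verification is a valid alternative to the route it actually takes (which factors through the auxiliary subalgebra $\lw=\CC[t]\lb\oplus t\CC[t]\lb^-$ of $\CC[t^{\pm1}]\otimes\lg^1_+$ and chains three isomorphisms). But your execution has a genuine gap in the Cartan direction, and it starts with a wrong dimension count. You claim that modulo $(t-\epsilon)\lnt$ every element of $\CC[t]\lb$ reduces ``using $t\equiv\epsilon$'' to its $\lb$-part, so that the quotient is spanned by $\lb$ and $t\ln^-$ and has dimension $\dim\lb+\dim\ln^-$. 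The substitution $t\equiv\epsilon$ is only licensed on $\CC[t]\ln$, where $(t-\epsilon)\CC[t]\ln$ lies in the ideal. On $\CC[t]\lh$ the ideal only contains $(t-\epsilon)t\,\CC[t]\lh$ (because $t\lh\subset t\lb^-\subset\lnt$ but $\lh\not\subset\lnt$), so $\CC[t]\lh$ reduces to $\lh\oplus t\lh$, \emph{two} copies of the Cartan, and the quotient has dimension $\dim\lb+\dim\lb^-=\dim\vV$ as it must --- but with basis coming from $\ln$, $\lh$, $t\lh$ and $t\ln^-$, not from $\lb$ and $t\ln^-$ alone.

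Consequently your Ansatz $\Theta(x,H+Y)=1\otimes x+c\,(1\otimes H)+t\otimes Y$ cannot work for any scalar $c$: its image misses $t\lh$ entirely, so it is not surjective, and on the two Cartan copies it sends $(a,b)\in\lh\times\lh$ to $1\otimes(a+cb)$, so it is not injective either. Your own step (iii) already detects this: the mixed bracket produces a term $t\otimes H$, which is a nonzero class in the quotient not lying in the image of your map, so ``applying $\Theta^{-1}$'' is not possible. The fix is exactly the paper's explicit $\gamma_\epsilon$, which sends the Cartan pair $(a,b)\in\lh\times\lh$ to $(a-\epsilon b)+2tb$, i.e.\ the $\lb^-$-Cartan component must hit $t\lh$ with nonzero coefficient. (Two smaller remarks: your fallback ``the isomorphism class is constant in a flat family'' is false in principle --- the whole point of the contraction is that it is not --- though your other option, checking $\epsilon=0$ directly or by continuity of the homomorphism condition, is fine and is what the paper does; and the reduction of the second statement about $\lbt/(t-\epsilon)\lbt$ to a quotient by the center is consistent with the paper.)
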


\begin{proof}
From Section~\ref{sec:defIb}, we have 
$
\lg_{+}^{1}= \lb\oplus\lb^{-}$ as vector spaces. Elements of $\lg^1_+$ will be written as couples with respect to this decomposition.

Set $\widetilde{\lg_+^1}:=\CC[t^{\pm1}]\otimes \lg_+^1$ and extend
$\iota^1_{\lg}$ to an injective $\CC[t^{\pm1}]$-linear  map
$\tilde{\lg}\rightarrow \widetilde{\lg_+^1}$. 
Consider the subspace $\lw:=\CC[t]\lb\oplus t\CC[t]\lb^{-}$ that is a
Lie subalgebra of $\widetilde{\lg_+^1}$. 
If $\epsilon \neq 0$, the Inönü-Wigner contraction \eqref{eq:4} on $\lg^1_+$ with respect to the
decomposition $\lb\oplus \lb^{-}$ 
gives rise to $\lg_{+}^{\epsilon}$ ($\epsilon\in \CC$). 
We easily deduce that the linear map
\begin{equation}
  \begin{array}{ccl@{\hspace{0.8cm}}l}
    \lg_{+}^{\epsilon}&\longto&\lw/(t-\epsilon)\lw\\
(x,y)&\longmapsto&x+ty+(t-\epsilon) \lw&\mbox{for any }x\in\lb\mbox{
                                         and }y\in\lb^-,
  \end{array}
\label{IW_iso}\end{equation}
is a Lie algebra isomorphism. For $\epsilon=0$, it is still a linear isomorphism and, by continuity, a Lie algebra homomorphism.

Set $\lb^{-}_0:=\iota^{1}_{\lg}(\lb^{-})=\{(h,h)|h\in \lh\}\oplus
\ln^{-}$. 
Observe that $t\lb^-_0$ is contained in $\lw$. 
Indeed, for any $h\in \lh$, the element $t(h,h)=t(h,0)+t(0,h)$ belongs
to $\CC[t]\lb\oplus t\CC[t]\lb^-$. 
In particular, one gets a linear map induced by the inclusions of
$\lb$ and $t\lb^-_0$ in $\lw$:
$$
\lb\oplus t\lb^-_0\longto\lw.
$$
One can easily check that it induces a linear isomorphism $\lb\oplus t\lb^-_0\longto\lw/(t-\epsilon)\lw$. Setting $\lbt_{\lw}:=\langle \lb\oplus t\lb_0^{-}\rangle_{Lie}$, the Lie subalgebra of $\lw$ generated by $\lb\oplus t\lb_0^{-}$, we thus get a Lie algebra isomorphism.
\begin{equation}
\lbt_{\lw}/((t-\epsilon)\lw\cap \lbt_{\lw})\longto
\lw/(t-\epsilon)\lw.\label{qiso}
\end{equation} 
Since, $\lb=\{(h,0)|h\in \lh\}\oplus \iota^1_{\lg}(\ln)$ and $ \langle \iota^1_{\lg}(\ln)\oplus \iota^1_{\lg}(t\lb^{-})\rangle_{Lie}=\iota^1_{\lg}(\langle \ln\oplus t\lb^{-}\rangle_{Lie})=\iota^1_{\lg}(\lnt)$, 
we have
\begin{equation}\lbt_{\lw}
=\{(h,0)|h\in \lh\}\oplus \iota^1_{\lg}(\lnt)\stackrel{}{\cong}
\iota^{1}_{\lg}(\lbt) \stackrel{}{\cong}\lbt,\label{blw}
\end{equation} 
the middle Lie algebra isomorphism being the identity on $\iota^1_{\lg}(\lnt)$
and sending $(h,0)$ to $\frac 12(h,h)$ for each $h\in \lh$. 
Moreover, $(t-\epsilon)\lw\cap
\lbt_{\lw}=(t-\epsilon)\iota^1_{\lg}(\lnt)$. 
Indeed, 
$(t-\epsilon) \iota^1_{\lg}(\lnt)$ is contained in
$(t-\epsilon)\lw\cap \lbt_{\lw}$, and 
$\lb\oplus t\lb^{-}_0$ is complementary to
$(t-\epsilon)\iota^1_{\lg}(\lnt)$ in $\lbt_{\lw}$.

We finally get the desired Lie isomorphism \[\lbt/(t-\epsilon)\lnt\stackrel{\eqref{blw}}{\cong} \lbt_{\lw}/(t-\epsilon)\iota^1_{\lg}(\lnt)\stackrel{\eqref{qiso}}{\cong} \lw/(t-\epsilon)\lw\stackrel{\eqref{IW_iso}}{\cong}\lg_{+}^{\epsilon}\]
\end{proof}

In addition, we can make explicit the isomorphism of Theorem~\ref{th:main}:
$$\begin{array}{cccll}
  \gamma_{\epsilon}\,:&\lg^\epsilon_+&\stackrel{\cong}{\longto}&\lbt/(t-\epsilon)\lnt\\
&(x,0)&\longmapsto&x&{\rm if\ } x\in\ln\\
&(0,y)&\longmapsto&ty&{\rm if\ } y\in\ln^-\\
&(a,b)&\longmapsto& (a-\epsilon b)+2tb &{\rm if\ } a,b\in\lh\\
\end{array}
$$
and its inverse map is induced by
$$
\begin{array}{cccll}
  \theta\,:&\lbt&\longto&\vV\\
&Px&\longmapsto&P(\epsilon)x&{\rm if\ } x\in\ln\\
&tRy&\longmapsto&R(\epsilon)y&{\rm if\ } y\in\ln^-\\
&Qh&\longmapsto& (\frac{Q(\epsilon)+Q(0)}{2}h,\frac{Q(\epsilon)-Q(0)}{2\epsilon}h)&{\rm if\ } h\in\lh\, (\epsilon\neq 0)\\[0.6em]
& & & (Q(0)h,\frac12Q'(0)h) & {\rm if\ } h\in\lh \, (\epsilon= 0)
\end{array}
$$

Note that, in order to prove Theorem~\ref{th:main}, we could
alternatively have checked directly that $\theta$ is a surjective Lie algebra
homomorphism from $\lbt$ onto $\lg^\epsilon_+$ with kernel $(t-\epsilon)\lnt$.


\section{Some subgroups of $\Aut(I\lb)$}
\label{sec:subgroups}
\subsection{The roots of $I\lb$}
\label{sec:roots}
From Sections~\ref{sec:defIb} and~\ref{sec:KM_interpret}, we can
interpret the Lie algebra $I\lb$ in the Kac-Moody world via the
isomorphism 
\[\func{I\lb}{\lbt/t\lnt}{(x,y)}{x+ty} \qquad \left(\begin{array}{c}x\in\lb,\\ y\in \lb^{-}\cong \lg/\ln\stackrel{\kappa}{\cong}\lb^*\end{array}\right)\]
From now on, this identification will be made systematically. In particular, we write $I\lb=\lb\oplus t\lb^{-}$. 
We first describe some basic properties of $I\lb$ in this language. 
\begin{lemma} 
\label{lem:structure_IB}
  \begin{enumerate}
\item The subalgebra $\lc:=\lh\oplus t\lh$ is a Cartan subalgebra of $I\lb$. 
Namely, $\lc$ is abelian and equal to its normalizer.
\item Under the action of $\lc$, $I\lb$ decomposes as
$$
I\lb=\lc\oplus \bigoplus_{\alpha\in\Phi^+}\lg_\alpha\oplus \bigoplus_{\alpha\in\Phi^-}t\lg_\alpha.
$$
For $\alpha\in\Phi^+$, $\lc$ acts on $\lg_\alpha$ 
with the  weight $(\alpha,0)\in \lh^*\times t\lh^*$. 
For $\alpha\in\Phi^-$, $\lc$ acts on $t\lg_\alpha$ 
with the  weight $(\alpha,0)\in \lh^*\times t\lh^*$. 
Here, we identified $\lc^*$ with $\lh^*\times t\lh^*$ in a natural way. 

\item The set of $\ad$-nilpotent elements of $I\lb$ is $\lnt/t\lnt=\ln\oplus t\lb^{-}$.
  \item The center of $I\lb$ is $\lz(I\lb)=t\lh$.
\item The derived subalgebra of   $I\lb$ is
  $[I\lb;I\lb]=\lnt/t\lnt$.
\end{enumerate}
\end{lemma}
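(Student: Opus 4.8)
The plan is to work entirely in the model $I\lb=\lb\oplus t\lb^{-}$ (equivalently $\lbt/t\lnt$ via the isomorphism just recalled), using the decomposition of Lemma~\ref{lem:structure_IB}(2) as the basic bookkeeping device. Recall $\lbt=\CC[t]\lb\oplus t\CC[t]\ln^{-}$ and $\lnt=\CC[t]\ln\oplus t\CC[t]\lb^{-}$, so $t\lnt=t\CC[t]\ln\oplus t^2\CC[t]\lb^{-}$ and the quotient $\lbt/t\lnt$ has representatives $\lb\oplus t\lb^{-}=\ln\oplus\lh\oplus t\lh\oplus t\ln^{-}$. I would fix, once and for all, the bracket formula: for $x,x'\in\lb$ and $y,y'\in\lb^{-}$,
\[
[(x,y),(x',y')]_{I}=([x,x'],H+Y),\qquad\text{where } [x,y']-[x',y]=X+H+Y\in\ln\oplus\lh\oplus\ln^{-},
\]
which is \eqref{eq:2I}. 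Everything below is a direct computation with this formula, sorted by root spaces.

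\textbf{(1) $\lc=\lh\oplus t\lh$ is a Cartan subalgebra.} First, $\lc$ is abelian: for $h,h'\in\lh$ one has $[(h,0),(h',0)]_I=([h,h'],\dots)=0$ since the correction term $H+Y$ comes from $[h,0]-[h',0]=0$; the brackets involving a $t\lh$ factor vanish because $t\lh\subset t\lb^{-}$ lies in the abelian ideal $t\lb^{-}$ (equivalently, $\lb^{*}$ is abelian). For the normalizer, use part (2): an arbitrary element $z$ decomposes along $\lc$ and the root spaces $\lg_\alpha$ ($\alpha\in\Phi^{+}$), $t\lg_\alpha$ ($\alpha\in\Phi^{-}$), all with nonzero $\lc$-weight $(\alpha,0)$. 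If $z\notin\lc$, pick $\alpha$ with nonzero component; choose $h\in\lh$ with $\alpha(h)\neq0$; then $[(h,0),z]$ has a nonzero component in the $(\alpha,0)$-weight space but $\lc$ itself sits in weight $0$, so $[(h,0),z]\notin\lc$, contradicting $z\in N_{I\lb}(\lc)$. The only delicate point is to confirm the $\lc$-weights are as claimed, in particular that $t\lh$ acts trivially (it does, being central, see (4)), so that the weight of $t\lg_\alpha$ is $(\alpha,0)$ and not $(\alpha,\text{something})$; this is immediate once (4) is in hand, so I would prove (4) early.

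\textbf{(2) The weight decomposition.} As a vector space $I\lb=\ln\oplus\lh\oplus t\lh\oplus t\ln^{-}$. The adjoint action of $(h,th')\in\lc$ on $(x,0)$ with $x\in\lg_\alpha\subset\ln$ ($\alpha\in\Phi^{+}$) gives $[(h,0),(x,0)]_I=([h,x],0)=(\alpha(h)x,0)$, while $(th',\cdot)$ contributes $0$; hence weight $(\alpha,0)$. For $(0,ty)$ with $y\in\lg_\alpha\subset\ln^{-}$ ($\alpha\in\Phi^{-}$): $[(h,0),(0,ty)]_I$ — here in the transferred bracket $t\lb^{-}$ is the abelian ideal carrying the coadjoint action, and a short computation with \eqref{eq:2I} (or directly with the coadjoint action, since $\kappa(\lg_\alpha,\lg_\beta)=0$ unless $\beta=-\alpha$) yields weight $(\alpha,0)$ again; and $t\lh$ acts as $0$. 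So $I\lb=\lc\oplus\bigoplus_{\alpha\in\Phi^{+}}\lg_\alpha\oplus\bigoplus_{\alpha\in\Phi^{-}}t\lg_\alpha$ with the stated weights, and the identification $\lc^{*}\cong\lh^{*}\times t\lh^{*}$ is the obvious one.

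\textbf{(3)–(5) Nilpotents, center, derived subalgebra.} For (4): $t\lb^{-}$ is the abelian ideal $\lb^{*}$, and $\lz(I\lb)$ is the kernel of the coadjoint action of $\lb$ on $\lb^{*}$ intersected with $\{z\in\lb: [z,\lb]=0, [z,\lb^{*}]=0\}$; since $\lb$ acts faithfully on itself ($\lz(\lb)=0$ for $\lg$ simple), no nonzero part of $\lb$ is central, and within $\lb^{*}$ the kernel of the coadjoint $\lb$-action is $(\lg/\lb)^{*}\cong t\lh\oplus t\ln^{-}$-annihilator; concretely $\kappa(\lb^{-}\cdot h',\square)$ is killed iff $[\lb,h']\subset\ln$, i.e. $h'\in\lh$, giving $\lz(I\lb)=t\lh$. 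For (5): from \eqref{eq:2I}, $[(x,y),(x',y')]_I=([x,x'],H+Y)$, so the first component of any bracket lies in $[\lb,\lb]=\ln$, and the second lies in $\lh\oplus\ln^{-}$, i.e. brackets land in $\ln\oplus t(\lh\oplus\ln^{-})=\ln\oplus t\lb^{-}$; conversely every element of $\ln$ is a bracket in $\ln$ (as $\ln=[\lb,\ln]$, using $\lg$ simple so $\lh\subset[\lb,\lb]$ after noting $\ln=[\lh,\ln]+[\ln,\ln]$, hence all simple root vectors and then all of $\ln$ are hit), and every element of $t\lb^{-}=t\lh\oplus t\ln^{-}$ is reached: taking $x\in\lg_\alpha$, $y'=0$, $x'\in\lg_{-\alpha}$ gives $[x,-y]$-type terms realizing $t\lh$ via $[e_\alpha,f_\alpha]$ and $t\ln^{-}$ via $[x,\cdot]$ with appropriate root vectors. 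Thus $[I\lb,I\lb]=\ln\oplus t\lb^{-}$. Now $\ln\oplus t\lb^{-}$ is precisely $\lnt/t\lnt$ under our identification (since $\lnt=\CC[t]\ln\oplus t\CC[t]\lb^{-}$, so $\lnt/t\lnt\cong\ln\oplus t\lb^{-}$), giving the last equality in (5). For (3): an element is $\ad$-nilpotent iff it lies in a maximal ad-nilpotent (ideal-like) locally nilpotent subalgebra; I claim the $\ad$-nilpotent set is exactly $[I\lb,I\lb]=\ln\oplus t\lb^{-}$. One inclusion: $\ln\oplus t\lb^{-}$ is a nilpotent ideal (it equals $[I\lb,I\lb]$ and one checks its lower central series terminates — $\ad$ of anything in $\lb^{*}$ is already very degenerate since $\lb^{*}$ is abelian, and $\ad$ of $\ln$ is nilpotent on $\lb$ and shifts into $\lb^{*}$), so all its elements are $\ad$-nilpotent. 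Converse: if $z=(h,th')+\text{(root parts)}$ is $\ad$-nilpotent with $h\neq0$, look at the semisimple part: $\ad z$ restricted to the $\lc$-weight decomposition has a component acting as $\alpha(h)$ on $\lg_\alpha$; if $h\neq0$ some $\alpha(h)\neq0$, so $\ad z$ has a nonzero eigenvalue, not nilpotent. Hence $h=0$, i.e. $z\in\ln\oplus t\lh\oplus t\ln^{-}=\ln\oplus t\lb^{-}$.

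\textbf{Main obstacle.} The routine parts are the weight computations in (1)–(2) and the ``image'' computations in (5). The step that needs genuine care is (3): showing that \emph{every} $\ad$-nilpotent element lies in $\ln\oplus t\lb^{-}$, since one must rule out $\ad$-nilpotency for elements with a nonzero $\lh$-component even when the $t\lh$-component is nonzero. The clean argument is the eigenvalue argument above: the $t\lh$-part is central (by (4)) hence contributes $0$ to $\ad z$, so $\ad z=\ad(h+\text{root parts})$ and its restriction to $\bigoplus_{\alpha}\lg_{\alpha}^{(t)}$ is upper-triangular with diagonal entries $\alpha(h)$ in a suitable ordering of roots, forcing $h=0$; the semisimple/nilpotent Jordan decomposition in $\mathrm{End}(I\lb)$ makes this rigorous. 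I would present the proof in the order (4), (1), (2), (5), (3), so that centrality of $t\lh$ is available wherever the weight ``$(\alpha,0)$ and not $(\alpha,*)$'' is used.
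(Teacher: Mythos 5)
Your proof is correct and follows essentially the same route as the paper: establish the $\lc$-weight decomposition with $t\lh$ acting trivially, then read off the normalizer, center, ad-nilpotent set (via the eigenvalue argument for a nonzero $\lh$-component) and derived subalgebra from it. The only differences are cosmetic — you verify the generation of $\ln\oplus t\lb^{-}$ by explicit brackets via \eqref{eq:2I} where the paper invokes that the simple root spaces generate $\lnt$, and you reorder the items to make centrality of $t\lh$ available early — neither of which changes the substance.
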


\begin{proof}
1-2)  The fact that $\lc$ is abelian and the decomposition in
$\lh$-eigenspaces are clear from the definition of $\lgt$.  
The action of $t\lh$ is zero since it sends $\lnt$
 to $t\lnt$ that vanishes itself in $I\lb$.
The decomposition of $I\lb$ in weight spaces under the action of $\lc$
follows.
Then this decomposition also implies that $\lc$ is its own normalizer in $I\lb$.\\
3) The elements of $\lnt/t\lnt$ are clearly $\ad$-nilpotent. From 2), an element with nonzero component in $\lh$ is not $\ad$-nilpotent.\\
4) Since $t\lh$ acts as $0$ on $\lnt/t\lnt$ and on $\lh$, we have $t\lh\subset \lz(I\lb)$. The decomposition in weight spaces implies the converse inclusion.\\
5) The inclusion $[I\lb,I\lb]\subset \lnt/t\lnt$ is clear. On the
other hand we deduce from the weight space decomposition that the
subspaces $(\lgt_{\alpha})_{\alpha\in \tilde \Delta}$ belong to $[I\lb,I\lb]$. Since they generate $\lnt$ in $\lgt$, the result follows.
\end{proof}

It follows from Lemma~\ref{lem:structure_IB} and Theorem~\ref{th:main}
that $\oIlb\cong I\lb/t\lh\cong \lg^0_+/\lz(\lg^0_+)\cong \lg^0$. Then
it is straightforward from Lemma~\ref{lem:structure_IB} and its proof that
\begin{itemize}
\item $\lh$ is a Cartan subalgebra of $\oIlb$.
\item The non-zero $\lh$-weights (resp. weight spaces) on $\oIlb$ coincide with the non-zero $\lc$-weights (resp. weight space) on $I\lb$ via projection. In particular $\Phi(\oIlb)\cong\Phi(I\lb)\cong  \Phi$.
\item $[\oIlb,\oIlb]=\lnt/t\lbt$.
\end{itemize}

From Lemma~\ref{lem:structure_IB} (2), the set $\Phi(I\lb)$ of nonzero weights of $\lc$
acting on $I\lb$ identifies with $\Phi$. It is also useful to embed
$\Phi(I\lb)$ in $\tilde\Phi$ by 
$$
\begin{array}{cccl}
  \varphi\,:&\Phi(I\lb)&\longto&\tilde\Phi\\
&\alpha\in\Phi^+&\longmapsto&\alpha\\
&\alpha\in\Phi^-&\longmapsto&\delta+\alpha
\end{array}
$$
Indeed, the weight space $(I\lb)_\alpha$ identifies with
$\lgt_{\varphi(\alpha)}$, for any $\alpha\in \Phi(I\lb)$. In particular, for $\alpha, \beta\in \tilde \Phi\cup \{0\}$, we have $[I\lb_{\varphi^{-1}(\alpha)}, I\lb_{\varphi^{-1}(\beta)}]\subset I\lb_{\varphi^{-1}(\alpha+\beta)}$ with equality when $\alpha,\beta,\alpha+\beta\notin \{0,\delta\}$.
Set also $\Delta(I\lb)=\varphi\inv(\tilde\Delta)=\Delta\cup\{\alpha_0\}$.

\begin{lemma}\label{Ib2}
\begin{enumerate}
\item  The derived subalgebra of  $I\lb^{(1)}:=[I\lb,I\lb]$ is
  $$I\lb^{(2)}=t\lh\oplus\bigoplus_{\alpha\in\Phi(I\lb)\setminus \Delta(I\lb)}(I\lb)_{\alpha}$$

\item Assume that $\lg$ is not $\mathfrak{sl}_2$. 
For $\alpha,\beta\in
   \Delta(I\lb)$ ($\alpha\neq\beta$), the corresponding entry of the
  generalized Cartan Matrix of $\lgkm$ is given by
$$
a_{\alpha,\beta}=-\max \{n\in \NN\;|\; \beta+n\alpha \in  \Phi(I\lb)\}.
$$ 
\end{enumerate} 
\end{lemma}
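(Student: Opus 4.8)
The plan is to compute both derived subalgebras using the weight-space decomposition of Lemma~\ref{lem:structure_IB}(2) together with the multiplicative rule $[I\lb_{\varphi^{-1}(\alpha)},I\lb_{\varphi^{-1}(\beta)}]\subseteq I\lb_{\varphi^{-1}(\alpha+\beta)}$, with equality whenever $\alpha,\beta,\alpha+\beta\notin\{0,\delta\}$. For part~(1), I would first recall from Lemma~\ref{lem:structure_IB}(5) that $I\lb^{(1)}=\lnt/t\lnt=\bigoplus_{\alpha\in\Phi(I\lb)}(I\lb)_\alpha$, i.e.\ all root spaces and no Cartan part. Bracketing, the root space $(I\lb)_\gamma$ with $\gamma\notin\Delta(I\lb)$ can be written as $(I\lb)_\alpha$-bracket-$(I\lb)_\beta$ for suitable $\alpha,\beta\in\Phi(I\lb)$ with $\varphi(\alpha)+\varphi(\beta)=\varphi(\gamma)$ and none of the three equal to $0$ or $\delta$: this is the standard fact that in $\tilde\Phi^+$ every non-simple root is a sum of two positive roots realized in $\lnt$, and one just has to check the $\delta$ and $0$ obstructions do not get in the way (here one uses $\lg\ne\sl_2$ so that $\Delta$ has at least two elements and the combinatorics of $\tilde\Delta$ behaves). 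Conversely, a simple-root space $(I\lb)_\alpha$ for $\alpha\in\Delta(I\lb)$ cannot appear: any bracket landing there would need $\varphi(\alpha)=\mu+\nu$ with $\mu,\nu\in\tilde\Phi^+$, impossible since $\varphi(\alpha)\in\tilde\Delta$. Finally one must show $t\lh\subseteq I\lb^{(2)}$: indeed $\delta=\alpha+\beta$ for $\alpha\in\Phi^+$, $\beta\in\Phi^-$ with $\varphi(\alpha)+\varphi(\beta)=\delta+\cdots$; more cleanly, $t\lh=\sum_{\alpha\in\Delta}t[\lg_\alpha,\lg_{-\alpha}]$, and each summand is $[(I\lb)_\alpha,(I\lb)_{-\alpha}]$, which lies in $I\lb^{(1)}$; iterating, since $(I\lb)_{-\alpha}=t\lg_{-\alpha}$ for $\alpha\in\Delta$ and $(I\lb)_\beta$ for suitable $\beta$ is in $I\lb^{(1)}$, one gets $t\lh\subseteq I\lb^{(2)}$ — this needs care because $-\alpha\notin\Delta(I\lb)$ is false precisely when $\alpha$ happens to be $-\alpha_0$, so I would argue via $\Phi$ having rank $\geq 2$ to find bracket pairs avoiding $\Delta(I\lb)$.

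For part~(2), the entry $a_{\alpha,\beta}$ of the generalized Cartan matrix of $\lgkm$ is by definition (recalled in the text via Serre relations) equal to $-\max\{n\in\NN\mid \varphi(\beta)+n\varphi(\alpha)\in\tilde\Phi\}$. So the task reduces to showing that for $\alpha,\beta\in\Delta(I\lb)$ with $\alpha\neq\beta$,
\[
\max\{n\in\NN\mid \varphi(\beta)+n\varphi(\alpha)\in\tilde\Phi\}=\max\{n\in\NN\mid \beta+n\alpha\in\Phi(I\lb)\}.
\]
I would split into the cases according to which of $\alpha,\beta$ equal $\alpha_0$. If both $\alpha,\beta\in\Delta$, then $\varphi$ is the identity on the relevant elements and $\beta+n\alpha\in\Phi$ iff $\beta+n\alpha\in\Phi(I\lb)$ (as $\beta+n\alpha$ stays positive for $n\geq 1$ since $\beta\in\Phi^+$), so both sides agree trivially. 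If $\alpha\in\Delta$ and $\beta=\alpha_0$: here $\varphi(\alpha_0)=\delta+\alpha_0$ and $\varphi(\alpha_0)+n\varphi(\alpha)=\delta+(\alpha_0+n\alpha)$, which lies in $\tilde\Phi$ iff $\alpha_0+n\alpha\in\Phi\cup\{0\}$; meanwhile $\alpha_0+n\alpha\in\Phi(I\lb)$ iff $\alpha_0+n\alpha\in\Phi$. The only discrepancy is the value $\alpha_0+n\alpha=0$, impossible since $\alpha_0$ is not a negative multiple of a simple root $\alpha\ne-\alpha_0$ unless $\lg=\sl_2$ — this is exactly where the hypothesis is used. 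The remaining case $\beta\in\Delta$, $\alpha=\alpha_0$ is symmetric: $\varphi(\beta)+n\varphi(\alpha_0)=\beta+n\alpha_0+n\delta$, in $\tilde\Phi$ iff $\beta+n\alpha_0\in\Phi\cup\{0\}$, and one checks $\beta+n\alpha_0\ne 0$ for $n\geq 1$ again because $\lg\ne\sl_2$, while $\beta+n\alpha_0\in\Phi(I\lb)$ iff $\beta+n\alpha_0\in\Phi$.

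The main obstacle is the bookkeeping around the imaginary root $\delta$ and the zero weight in part~(1): the bracket rule only gives equality of root spaces away from $\{0,\delta\}$, so showing $(I\lb)_\gamma\subseteq I\lb^{(2)}$ for non-simple $\gamma$ and $t\lh\subseteq I\lb^{(2)}$ requires exhibiting explicit decompositions $\varphi(\gamma)=\mu+\nu$ (resp.\ $\delta=\mu+\nu$) with $\mu,\nu,\mu+\nu$ all outside $\{0,\delta\}$, and verifying this is always possible once $\mathrm{rk}\,\lg\geq 2$. I would handle $t\lh$ by noting $t\lg_\beta\subseteq I\lb^{(1)}$ for every $\beta\in\Phi^-$, picking $\beta\in\Phi^-\setminus\{\alpha_0\}$ (possible as $\mathrm{rk}\,\lg\geq 2$) and writing $t\lh$ via brackets $[(I\lb)_{\beta'},(I\lb)_{\beta''}]$ with $\varphi(\beta')+\varphi(\beta'')$ a positive multiple of $\delta$ but each factor a genuine root — concretely, bracketing $\lg_\alpha\subseteq I\lb^{(1)}$ with $t\lg_{-\alpha}\subseteq I\lb^{(1)}$ for $\alpha\in\Phi^+$ with $\alpha\ne-\alpha_0$ gives the full $t\lh$. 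Once these inclusions are in hand, the reverse inclusion $I\lb^{(2)}\subseteq t\lh\oplus\bigoplus_{\gamma\notin\Delta(I\lb)}(I\lb)_\gamma$ is immediate from the bracket rule, since a simple root space $(I\lb)_\alpha$, $\alpha\in\Delta(I\lb)$, is never a sum of two elements of $\Phi(I\lb)$ under $\varphi$ (as $\varphi(\alpha)\in\tilde\Delta$ is indecomposable in $\tilde\Phi^+$), completing the proof.
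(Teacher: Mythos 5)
Your part (2) is essentially the paper's proof: both reduce to the formula $a_{\alpha,\beta}=-\max\{n\mid\varphi(\beta)+n\varphi(\alpha)\in\tilde\Phi\}$ recalled in Section~\ref{sec:affLA} and then check, case by case according to which of $\alpha,\beta$ equals $\alpha_0$, that the maximum is unchanged when $\tilde\Phi$ is replaced by $\Phi(I\lb)$, invoking $\lg\neq\sl_2$ exactly where you do. For part (1) you take a genuinely different and more laborious route. The paper computes upstairs in $\lnt$: since $\lnt$ is generated by the simple root spaces and $\tilde\Delta$ is linearly independent, $[\lnt,\lnt]=\bigoplus_{\alpha\in\tilde\Phi^+\setminus\tilde\Delta}\lgt_\alpha$ (imaginary root spaces included), and one then just reads off the image in the quotient by $t\lnt$. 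Working downstairs in $I\lb$ as you do forces you to exhibit, for each non-simple $\gamma\in\Phi(I\lb)$, a decomposition $\varphi(\gamma)=\mu+\nu$ with \emph{both} $\mu,\nu$ lying in $\varphi(\Phi(I\lb))=\Phi^+\cup(\delta+\Phi^-)$. That, and not the avoidance of $\{0,\delta\}$, is the real constraint: $\varphi(\Phi(I\lb))$ is a proper subset of $\tilde\Phi^+$ and the root spaces of the complement vanish in $I\lb$, so your cited ``standard fact'' about sums of two positive roots of $\tilde\Phi^+$ is not by itself enough. For $\gamma\in\Phi^+\setminus\Delta$ the needed decomposition is the usual one inside $\Phi^+$; for $\gamma\in\Phi^-\setminus\{\alpha_0\}$ one needs $\gamma=\mu+\gamma'$ with $\mu\in\Phi^+$ and $\gamma'\in\Phi^-$, i.e.\ that every positive root other than the highest can be raised by a simple root to another positive root. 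This is true, and your plan is workable once it is supplied, but the paper's route avoids the issue entirely.

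Two corrections. First, $I\lb^{(1)}=\lnt/t\lnt=\ln\oplus t\lb^-$ \emph{does} contain a piece of the Cartan subalgebra $\lc$, namely the center $t\lh$; your assertion that it equals $\bigoplus_{\alpha\in\Phi(I\lb)}(I\lb)_\alpha$ with ``no Cartan part'' is false, although harmless here because $t\lh$ is central and contributes nothing to further brackets. Second, the caution surrounding $t\lh\subseteq I\lb^{(2)}$ is unnecessary: $I\lb^{(1)}$ contains \emph{every} root space of $I\lb$, so $[\lg_\alpha,t\lg_{-\alpha}]=t[\lg_\alpha,\lg_{-\alpha}]\subseteq I\lb^{(2)}$ for every $\alpha\in\Delta$, with no condition on $\alpha$, and these subspaces span $t\lh$. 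The appeals to $-\alpha\notin\Delta(I\lb)$, to $\alpha\neq-\alpha_0$ and to the rank of $\lg$ being at least $2$ are red herrings --- and they had better be, since part (1), unlike part (2), is asserted for $\sl_2$ as well, where your rank restriction would leave nothing to bracket.
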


\begin{proof}
1) Recall that $\lnt$ is generated as a Lie algebra by the
$(\lgt_{\alpha})_{\alpha\in \tilde \Delta}$. Thus, for weight reasons,
the $(\lgt_{\alpha})_{\alpha\in \tilde\Phi\setminus \tilde \Delta}$
are root spaces included in $[\lnt,\lnt]$. Since $\tilde \Delta$ is a
linearly independent set, they are in fact the only root spaces not
contained in $[\lnt,\lnt]$.
Taking a quotient, this yields $\bigoplus_{\alpha\in\Phi(I\lb)\setminus \Delta(I\lb)}(I\lb)_{\alpha}= I\lb^{(2)}$.  

2) Recall that the statement is valid if we replace $\Phi(I\lb)$ by
$\tilde \Phi$, see Section~\ref{sec:affLA}. It is thus sufficient to
show that \[\beta+n\alpha\in \tilde\Phi\ \Rightarrow\ \beta+n\alpha\in \Phi(I\lb).\] 
When $\alpha,\beta\in \Delta$, the statement is clear since $\Phi^{+}\subset \Phi(I\lb)$.\\
If $\beta=\delta+\alpha_0$, then $\beta+n\alpha\in \tilde\Phi$ means that $\alpha_0+n\alpha\in \Phi$. Expressing $\alpha_0$ as a linear combination of simple roots, one gets only negative coefficients. Since $\lg$ is not $\sl_2$,  some of them remain negative in the expression of $\alpha_0+n\alpha$, 
so this root has to lie in $\Phi^{-}$. Thus $\beta+n\alpha\in \Phi(I\lb)$.\\
If $\alpha=\delta+\alpha_0$, then $\beta+n\alpha\in \tilde\Phi$ means that $\beta+n\alpha_0\in \Phi$. 
For height reasons, 
we must have $n\in\{0,1\}$. Then, $\beta+n\alpha\in \Phi(I\lb)$.
\end{proof}

\begin{remark}
  One can observe that the first assertion of Lemma~\ref{Ib2} is
  similar to
$$
[\ln,\ln]=\bigoplus_{\alpha\in\Phi^+\setminus \Delta}\lb_{\alpha}.
$$
\end{remark}

\subsection{The adjoint subgroup of $\Aut(I\lb)$}

Let $G$ be the adjoint group with Lie algebra $\lg$. Let $T$ and $B$
be the connected subgroups of $G$ with Lie algebras $\lh$ and $\lb$.
Consider now $\lb^{-}\cong\lg/\ln$ equipped with the addition as an abelian
algebraic group. 
The adjoint action of $B$ on $\lg$ stabilizes $\ln$ and
induces a linear action on $\lb^{-}\cong\lg/\ln$ by group isomorphisms.
We can construct the semidirect product:
$$
IB:=B\ltimes \lb^{-}.
$$ 
By construction the Lie algebra of $IB$ identifies with $I\lb$. 
The adjoint action of $IB$ on $I\lb$ is given by
\begin{equation}
  \begin{array}{ccl@{\quad}l}
   IB\times I\lb &\longto&I\lb\\
((b,f),x+ty)&\longmapsto&b\cdot x+ tb\cdot(y+[f,x]+\ln)&\mbox{for }b\in
                                                      B,\,x\in\lb\mbox{
                                                      and } f, y\in\lb^-,
  \end{array}
\label{eq:action_ADIB}\end{equation}
where $y+[f,x]+\ln$ is viewed as an element of $\lg/\ln\cong \lb^{-}$ and where $\cdot$ denotes the $B$-action on $\lb$ and on $\lb^{-}$.
It induces a group homomorphism
$$
\Ad\,:\, IB\longto \Aut(I\lb)
$$
with kernel $Z(IB)\cong(1,\lh)$. In particular, one gets: 

\begin{lemma}
  The image $\Ad(IB)$ is isomorphic to $B\ltimes \lg/\lb$.
\end{lemma}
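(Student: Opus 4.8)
The plan is to unwind the definitions and identify the kernel of $\Ad$ precisely. First I would observe that $\Ad$ is a group homomorphism from $IB = B\ltimes\lb^-$ into $\Aut(I\lb)$, so by the first isomorphism theorem $\Ad(IB)\cong IB/\Ker(\Ad)$; the whole task is therefore to compute $\Ker(\Ad)$ and recognize the quotient. An element $(b,f)\in IB$ is in the kernel exactly when the formula \eqref{eq:action_ADIB} gives the identity map on $I\lb$, i.e. $b\cdot x + t\,b\cdot(y+[f,x]+\ln) = x+ty$ for all $x\in\lb$, $y\in\lb^-$.

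Reading off the $\lb$-component, the condition forces $b\cdot x = x$ for all $x\in\lb$, i.e. $\Ad(b)$ acts trivially on $\lb$. Since $B$ acts faithfully on $\lb$ (as $G$ is the adjoint group and $\lb$ is self-normalizing, or more simply because the centralizer of $\lb$ in $G$ is trivial), this gives $b=1$. With $b=1$ the remaining condition is $y + [f,x] + \ln = y + \ln$ in $\lg/\ln\cong\lb^-$ for all $x\in\lb$ and all $y$, which says $[f,\lb]\subseteq\ln$, i.e. $f\in\lb^-$ maps into the normalizer of $\ln$ modulo $\ln$. I would then check that $\{f\in\lb^- : [f,\lb]\subseteq\ln\}$ is exactly $\lh$: writing $f = h + \zeta$ with $h\in\lh$, $\zeta\in\ln^-$, the bracket $[f,\lh]\subseteq\ln$ forces $\zeta=0$ by examining negative root components (for each simple root $\alpha$, $[\,\cdot\,,h_\alpha]$ detects the $\ln^-$-part of $\zeta$), and conversely $[\lh,\lb]\subseteq\ln$ holds trivially. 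Hence $\Ker(\Ad) = \{1\}\ltimes\lh = Z(IB)$, as asserted in the text preceding the lemma.

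It then remains to identify $IB/Z(IB) = (B\ltimes\lb^-)/(\{1\}\times\lh)$ with $B\ltimes\lg/\lb$. Since $\lh$ is a $B$-submodule of $\lb^-\cong\lg/\ln$ (the adjoint action of $B$ on $\lg/\ln$ preserves the image of $\lh$, as $B$ stabilizes $\lb$ and hence $\lb/\ln$), the quotient of the semidirect product by the normal subgroup $\{1\}\times\lh$ is again a semidirect product $B\ltimes(\lb^-/\lh)$, and $\lb^-/\lh \cong \lg/\ln \,/\, \lb/\ln \cong \lg/\lb$ as $B$-modules. This gives $\Ad(IB)\cong B\ltimes\lg/\lb$.

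The only mildly delicate point is the identification $\{f\in\lb^-: [f,\lb]\subseteq\ln\} = \lh$; everything else is formal manipulation of semidirect products and the observation that $B$ acts faithfully on $\lb$. One can shorten even that step by noting $[f,\lb]\subseteq\ln$ already for $f\in\lb^-$ is equivalent to the weight-space statement that $f$ has no negative root component, using that $\ln$ is spanned by positive root spaces and $[\lg_{-\alpha},\lb]$ contains a nonpositive weight vector whenever $\alpha\in\Phi^+$; I expect this to be the one spot worth spelling out in the write-up, though it is routine.
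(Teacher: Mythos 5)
Your proof is correct and follows the same route the paper takes: the paper simply asserts that $\Ker(\Ad)=Z(IB)\cong(1,\lh)$ and deduces the lemma, while you fill in the verification of that kernel computation (faithfulness of the $B$-action on $\lb$ for $G$ adjoint, and the weight-space argument showing $\{f\in\lb^-:[f,\lb]\subseteq\ln\}=\lh$) together with the identification $\lb^-/\lh\cong\lg/\lb$ as $B$-modules. No gaps; your write-up is just a more detailed version of the paper's one-line argument.
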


Note also that $\Ad(IB)=H\ltimes (N\ltimes \lg/\lb)$ where $N$ and $H$ are the connected subgroups of $B$ with respective Lie algebras $\ln$ and $\lh$. Since $\ln+t \lb^{-}$ is the set of ad-nilpotent elements of $I\lb$, we get the following result from \eqref{eq:action_ADIB}.
\begin{lemma}
\begin{enumerate}
\item The group of elementary automorphisms $\Aut_e(I\lb)=\exp \ad (\ln+t \lb^{-})$ coincides with $N\ltimes \lg/\lb$.
\item $\Ad(IB)=\exp\ad (I\lb)$
\end{enumerate}
\end{lemma}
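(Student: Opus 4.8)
The plan is to read both identities off the explicit formula~\eqref{eq:action_ADIB} for $\Ad$, using the naturality of the exponential: the algebraic‑group morphism $\Ad\colon IB\to\Aut(I\lb)$ has differential $\ad\colon I\lb\to\End(I\lb)$, so $\Ad\circ\exp_{IB}=\exp\circ\ad$. Set $\lu:=\ln\oplus t\lb^{-}$. By Lemma~\ref{lem:structure_IB}(3) this is the set of $\ad$‑nilpotent elements of $I\lb$, and by Lemma~\ref{lem:structure_IB}(5) it is the ideal $[I\lb,I\lb]$, hence a subalgebra; it is nilpotent by Engel's theorem. Therefore $\ad(\lu)\subseteq\End(I\lb)$ is a nilpotent Lie algebra of nilpotent operators, so $\{\exp\ad x : x\in\lu\}$ is already a unipotent subgroup of $\Aut(I\lb)$, which we denote $\exp\ad(\lu)$, and it is precisely the group $\Aut_e(I\lb)$ generated by all $\exp\ad x$ with $x$ $\ad$‑nilpotent. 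On the group side, $\operatorname{Lie}(IB)$ is identified with $I\lb=\lb\oplus t\lb^{-}$ in such a way that the abelian factor $\lb^{-}$ of $IB=B\ltimes\lb^{-}$ corresponds to $t\lb^{-}$; thus the unipotent subgroup $N\ltimes\lb^{-}\subseteq IB$ has Lie algebra $\ln\oplus t\lb^{-}=\lu$, whence $N\ltimes\lb^{-}=\exp_{IB}(\lu)$.

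For assertion~(1), applying $\Ad$ and the naturality above gives $\Ad(N\ltimes\lb^{-})=\exp\ad(\lu)=\Aut_e(I\lb)$. Since $\ker\Ad=Z(IB)\cong\lh$ is contained in $\lb^{-}\subseteq N\ltimes\lb^{-}$, one has $\Ad(N\ltimes\lb^{-})=(N\ltimes\lb^{-})/\lh=N\ltimes(\lb^{-}/\lh)=N\ltimes\lg/\lb$, which is exactly the factor so denoted in $\Ad(IB)=H\ltimes(N\ltimes\lg/\lb)$. (If one prefers to argue straight from~\eqref{eq:action_ADIB}: there $\Ad(\exp X,0)=\exp\ad X$ for $X\in\ln$, and $\Ad(1,f)=\Id+\ad(tf)=\exp\ad(tf)$ for $f\in\lb^{-}$ because $(\ad tf)^{2}=0$; these exponentials generate $N$ and the $\lg/\lb$‑factor respectively.)

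For assertion~(2), write $IB=H\ltimes(N\ltimes\lb^{-})$ using $B=H\ltimes N$, so that $\Ad(IB)=\Ad(H)\cdot\Aut_e(I\lb)$. Since $H$ is a complex torus, $\exp_{IB}\colon\lh\to H$ is onto, hence $\Ad(H)=\exp\ad(\lh)$; and as $I\lb=\lh\oplus\lu$ as a vector space, we get $\Ad(IB)=\exp\ad(\lh)\cdot\exp\ad(\lu)=\exp\ad(I\lb)$. More conceptually: $\Aut(I\lb)$ is an algebraic group and $\Ad(IB)$ is the image of the connected group $IB$ under the morphism $\Ad$, hence a connected algebraic subgroup whose Lie algebra is $d\Ad(\operatorname{Lie}(IB))=\ad(I\lb)$; since a connected algebraic subgroup is determined by its Lie algebra, $\Ad(IB)=\exp\ad(I\lb)$.

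The only delicate point is bookkeeping rather than mathematics: one must carefully match, through $\Ad$, the group exponential $\exp_{IB}$ of the nilpotent subalgebra $\lu\subseteq\operatorname{Lie}(IB)$ with the matrix exponentiation of $\ad(\lu)$ acting on $I\lb$ — that is, $N\ltimes\lb^{-}=\exp_{IB}(\lu)$ together with $\Ad\circ\exp_{IB}=\exp\circ\ad$ — and one must keep track of the fact that the identification $\operatorname{Lie}(IB)\cong I\lb$ sends the abelian factor $\lb^{-}$ to $t\lb^{-}$. Everything else follows formally from Lemma~\ref{lem:structure_IB} and~\eqref{eq:action_ADIB}.
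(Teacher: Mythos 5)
Your proof is correct and takes essentially the same route the paper intends: the paper states this lemma without a separate argument, reading it off from the explicit formula \eqref{eq:action_ADIB} for the adjoint action together with Lemma~\ref{lem:structure_IB}, which is exactly what you do, only with the functoriality $\Ad\circ\exp_{IB}=\exp\circ\ad$, the unipotence of $N\ltimes\lb^{-}$ and of $\ad(\ln+t\lb^{-})$, and the quotient by $\ker\Ad=(1,\lh)$ spelled out explicitly. No gaps.
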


\subsection{A unipotent subgroup of $\Aut(I\lb)$}







Let $\la$ be a Lie algebra. We consider the derived subalgebra $\la^{(1)}:=[\la,\la]$, the center $\lz:=\lz(\la)$ and the quotient Lie algebra $\bla:=\la/\lz$.
\newline\indent Any linear map $u\in \Hom(\la/\la^{(1)},\lz)$, defines a linear map $\bar u:\left\{\func{ \la}{\la}X{X+u(X)}\right.$. Since $u$ takes values in $\lz$ and vanishes on $\la^{(1)}$, we have 
\[[\bar u(X),\bar u(Y)]=[X+u(X),Y+u(Y)]=[X,Y]=[X,Y]+u([X,Y])=\bar u([X,Y]).\]
In other words, $\bar u$ is a morphism of Lie algebras.
\newline\indent On the other hand, any $\theta\in \Aut(\la)$
stabilizes the center of $\la$, and hence it induces an automorphism
of $\bla$. This yields a natural group homomorphism
\begin{equation}R:\Aut(\la)\rightarrow \Aut(\bla).\label{proj_Aut}\end{equation}

\begin{lemma}\label{lem:Homhz}
Assume that $\lz(\la)\subset \la^{(1)}$. Under previous notation, we have an exact sequence of groups
\[\begin{array}{r c l}0\longrightarrow\Hom(\la/\la^{(1)},\lz)&\longrightarrow&\Aut(\la)\stackrel{R}{\longrightarrow}\Aut(\bla)\\
u&\longmapsto&\bar u
\end{array}
\]
where $\Hom(\la/\la^{(1)},\lz)$ is seen as the additive vector group.
\end{lemma}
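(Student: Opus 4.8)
The plan is to verify exactness at all three spots of the sequence, after first checking that the maps make sense.

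First I would record that $u \mapsto \bar u$ is a group homomorphism: for $u, v \in \Hom(\la/\la^{(1)}, \lz)$ one has $\overline{u+v}(X) = X + u(X) + v(X)$, while $(\bar u \circ \bar v)(X) = \bar u(X + v(X)) = X + v(X) + u(X + v(X)) = X + v(X) + u(X)$, the last step because $v(X) \in \lz \subset \la^{(1)}$ so $u(v(X)) = 0$. Hence $\overline{u+v} = \bar u \circ \bar v$, and since $\bar 0 = \Id$, each $\bar u$ is invertible with inverse $\overline{-u}$, confirming the map lands in $\Aut(\la)$ and is a homomorphism of the additive group.

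Next, injectivity of $u \mapsto \bar u$: if $\bar u = \Id$ then $u(X) = 0$ for all $X$, so $u = 0$. For exactness at $\Aut(\la)$, I would show $\Ker R = \{\bar u\}$. If $\theta \in \Aut(\la)$ induces the identity on $\bla = \la/\lz$, then $\theta(X) - X \in \lz$ for all $X$; define $u(X) := \theta(X) - X$, which is linear $\la \to \lz$. I must check $u$ factors through $\la/\la^{(1)}$, i.e. $u$ vanishes on $[\la,\la]$: indeed $u([X,Y]) = \theta([X,Y]) - [X,Y] = [\theta X, \theta Y] - [X,Y] = [X + u(X), Y + u(Y)] - [X,Y]$, and expanding the bracket, all terms involving $u(X)$ or $u(Y)$ vanish because these lie in $\lz$, leaving $0$. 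So $u \in \Hom(\la/\la^{(1)}, \lz)$ and $\theta = \bar u$. Conversely each $\bar u$ clearly lies in $\Ker R$ since $\bar u(X) \equiv X \pmod{\lz}$.

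The remaining and genuinely substantive point is exactness at $\Aut(\bla)$, i.e. surjectivity of $R$ — and this is where the hypothesis $\lz(\la) \subset \la^{(1)}$ is essential. Given $\bar\theta \in \Aut(\bla)$, I need to lift it to an automorphism of $\la$. The natural attempt is: choose a linear section $s : \bla \to \la$ of the projection $p : \la \to \bla$, and try to define $\theta$ on $\la = s(\bla) \oplus \lz$ by $\theta(s(\bar X)) = s(\bar\theta(\bar X))$ together with some linear automorphism on $\lz$; but $s$ need not be a Lie algebra map, so this need not respect brackets.

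The main obstacle is thus to show the lift genuinely exists; I expect this is settled by an explicit cohomological/structural argument special to the situation at hand. I would note that in the application $\bla = \oIlb$ and $\la = I\lb$, and by the results preceding Lemma~\ref{Ib2} the center $\lz(I\lb) = t\lh$ is contained in $[I\lb, I\lb] = \lnt/t\lnt$, so the hypothesis holds; more importantly, the paper will likely invoke that $\Aut(\oIlb)$ has been (or will be) described explicitly so that each automorphism of $\oIlb$ is visibly induced by one of $I\lb$ (for instance via the diagram automorphisms, the adjoint group $IB$ modulo its center, the torus scaling $\CC^*$, and the $\Mc_r(\CC)$-part), whence $R$ is onto by inspection. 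Absent that, the abstract proof of surjectivity proceeds by picking the section $s$ so that the resulting bracket discrepancy $\beta(\bar X, \bar Y) := s([\bar X, \bar Y]) - [s\bar X, s\bar Y] \in \lz$ is a $2$-cocycle, checking $\bar\theta$ transports it to a cohomologous cocycle (this uses that $H^2$ classes are intrinsic), and absorbing the coboundary correction into the choice of $s$ and of the $\lz$-component of $\theta$; the hypothesis $\lz \subset \la^{(1)}$ guarantees there is no obstruction from the abelianization. I would present the clean inspection-based argument if the explicit description of $\Aut(\oIlb)$ is already available, and fall back to the cocycle argument otherwise.
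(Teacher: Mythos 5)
Your verification of exactness at $\Hom(\la/\la^{(1)},\lz)$ and at $\Aut(\la)$ is correct and is essentially the paper's own proof: the same computation $\bar u\circ\bar v=\overline{u+v}$ (valid precisely because $\lz\subset\la^{(1)}\subset\Ker u$), and the same identification of $\Ker R$ with the image by setting $u:=\theta-\Id$, which lands in $\lz$ and vanishes on $\la^{(1)}$ since $\theta([X,Y])\in[X+\lz,Y+\lz]=[X,Y]$.

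The last (and longest) part of your proposal, however, addresses a claim the lemma does not make. The sequence is not asserted to end in $\longrightarrow 0$, so exactness at $\Aut(\bla)$, i.e.\ surjectivity of $R$, is not part of the statement; the paper says so explicitly right after Corollary~\ref{coro:U_barIb}, and proves surjectivity only for the specific algebra $\la=I\lb$ later, as a byproduct of the generation result in Lemma~\ref{lem:Autgen}. Note also that the hypothesis $\lz\subset\la^{(1)}$ is what makes $u\mapsto\bar u$ a homomorphism (the step you yourself used), not an ingredient for surjectivity. Your sketched general cohomological lifting argument would in any case not close the gap: an automorphism $\bar\theta$ of $\bla$ transports the extension cocycle to one that need not be cohomologous to the original (even up to an automorphism of $\lz$), and this is a genuine obstruction, so $R$ is not surjective for arbitrary $\la$. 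None of this affects the validity of your proof of the lemma as actually stated, which is complete.
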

We denote \begin{equation}U:=\{\bar u\;|\;u\in \Hom(I\lb/I\lb^{(1)},\lz(I\lb))\}.\label{def_U}\end{equation} 
This lemma, together with Lemma~\ref{lem:structure_IB}, implies the following results
\begin{coro}\label{coro:U_barIb}
\begin{enumerate}
\item $(U,\circ)$ is a normal subgroup of $\Aut(I\lb)$ of dimension $(\dim \lh)^2$
\item  $R(\Aut(I\lb))=\Aut(I\lb)/U\subset \Aut(\oIlb)$. 
\end{enumerate}
\end{coro}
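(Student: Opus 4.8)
The corollary has two parts, and both should follow by feeding the structural data of $I\lb$ into Lemma~\ref{lem:Homhz}. The first task is to verify the hypothesis of that lemma, namely $\lz(I\lb)\subset I\lb^{(1)}$. By Lemma~\ref{lem:structure_IB}(4)--(5) we have $\lz(I\lb)=t\lh$ and $I\lb^{(1)}=[I\lb,I\lb]=\lnt/t\lnt=\ln\oplus t\lb^-$; since $t\lh\subset t\lb^-$, the inclusion holds. So Lemma~\ref{lem:Homhz} applies and gives the exact sequence
\[
0\longrightarrow \Hom(I\lb/I\lb^{(1)},\lz(I\lb))\longrightarrow \Aut(I\lb)\stackrel{R}{\longrightarrow}\Aut(\oIlb),
\]
where I am using the identification $\oIlb\cong I\lb/\lz(I\lb)$ established just after Lemma~\ref{lem:structure_IB}. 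The image of the first map is exactly $U$ by the definition \eqref{def_U}.

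For part~(1): the exact sequence identifies $U$ with $\Hom(I\lb/I\lb^{(1)},\lz(I\lb))$ as an additive vector group, so $(U,\circ)$ is a group; it is normal in $\Aut(I\lb)$ because it is the kernel of the homomorphism $R$. For the dimension, $I\lb/I\lb^{(1)}=(\lb\oplus t\lb^-)/(\ln\oplus t\lb^-)\cong\lb/\ln\cong\lh$, while $\lz(I\lb)=t\lh\cong\lh$; hence $\dim\Hom(I\lb/I\lb^{(1)},\lz(I\lb))=(\dim\lh)^2$. (Note $\dim\lh=r$, consistent with the $\Mc_r(\CC)$ factor in Theorem~\ref{th:AutIntro}.)

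For part~(2): exactness of the sequence gives $R(\Aut(I\lb))\cong\Aut(I\lb)/\Ker R=\Aut(I\lb)/U$, and since $R$ takes values in $\Aut(\oIlb)$ by construction (recall $R$ is the map \eqref{proj_Aut} for $\la=I\lb$, $\bla=\oIlb$), we get $\Aut(I\lb)/U\cong R(\Aut(I\lb))\subset\Aut(\oIlb)$. I do not expect a genuine obstacle here: the only point requiring a moment's care is bookkeeping the three identifications $I\lb/I\lb^{(1)}\cong\lh$, $\lz(I\lb)\cong\lh$, and $\oIlb\cong I\lb/\lz(I\lb)$, all of which are already recorded in or immediately after Lemma~\ref{lem:structure_IB}. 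So the proof is essentially an assembly of Lemma~\ref{lem:Homhz} with Lemma~\ref{lem:structure_IB}, exactly as the statement of the corollary indicates.
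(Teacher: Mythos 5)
Your proposal is correct and follows exactly the route the paper intends: the paper states the corollary as an immediate consequence of Lemma~\ref{lem:Homhz} applied with $\la=I\lb$ together with the structural facts of Lemma~\ref{lem:structure_IB}, and your write-up simply fills in the same verifications (the hypothesis $\lz(I\lb)=t\lh\subset \ln\oplus t\lb^-=I\lb^{(1)}$, the identifications $I\lb/I\lb^{(1)}\cong\lh$ and $\lz(I\lb)\cong\lh$ giving dimension $(\dim\lh)^2$, and normality of $U$ as $\Ker R$). No gaps.
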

We will see in Lemma\ref{lem:Autgen} that the last inclusion is actually an equality (\emph{i.e.} the sequence of Lemma \ref{lem:Homhz} is a short exact sequence for $\la=I\lb$)
\begin{proof}[Proof of Lemma~\ref{lem:Homhz}]
We have 
\[(\bar u\circ\bar v)(X)=(X+v(X))+u(X+v(X))=X+u(X)+v(X)=\overline{u+v}(X)\]
where the middle equality is due to $v(X)\in \lz\subset \la^{(1)}\subset \Ker(u)$.
So the map $u\mapsto \bar u$ is
semi-group homomorphism from $(\Hom(\la/\la^{(1)},\lz),+)$ to
$(\End(\la),\circ)$. Since $(\Hom(\la/\la^{(1)}],\lz),+)$ is actually
a group, its image is contained in $\Aut(\la)$. 

It is clear that the map $u\mapsto \bar u$ is injective and, since $u$ takes values in $\lz$, that $R(\bar u)=Id_{\bla}$. In order to prove exactness of the sequence at $\Aut(\la)$, there remains to prove the implication \[\forall \theta\in \Aut(\la), \, R(\theta)=Id_{\bla}\Rightarrow ((\theta-Id)(\la)\subset \lz)\textrm{ and }  ((\theta-Id)_{|\la^{(1)}}=0)\]
The first property is immediate. 
The second one follows from the fact that, for such a $\theta$, we have $\theta([X,Y])\in [X+\lz,Y+\lz]=[X,Y]$. 
\end{proof}

\subsection{The loop subgroup}

\begin{lemma}
\label{lem:Cstar}
  The following map is an injective group homomorphism
$$
\begin{array}{ccl}
  \CC^*&\longto&\Aut(I\lb)\\
	\tau&\longmapsto&\left(
                            \begin{array}{cccll}\delta_\tau\,: &
                              I\lb&\longto&I\lb\\
&x&\longmapsto&x&{\rm if\ }x\in\lb\\
&ty&\longmapsto&\tau ty&{\rm if\ }y\in\lb^{-}\\
                            \end{array}\right ).

\end{array}
$$  
\end{lemma}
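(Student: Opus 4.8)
The plan is to verify directly that each $\delta_\tau$ is a Lie algebra automorphism of $I\lb$, and that $\tau\mapsto\delta_\tau$ is an injective homomorphism. First I would check well-definedness: writing $I\lb=\lb\oplus t\lb^-$ as in Section~\ref{sec:roots}, the map $\delta_\tau$ is defined to be the identity on the summand $\lb$ and multiplication by $\tau$ on the summand $t\lb^-$; since $\lb$ and $t\lb^-$ are complementary subspaces of $I\lb$, this is an unambiguous linear map, clearly invertible with inverse $\delta_{\tau^{-1}}$. Injectivity of $\tau\mapsto\delta_\tau$ and the homomorphism property $\delta_\tau\circ\delta_{\tau'}=\delta_{\tau\tau'}$ are then immediate from the same block description (the scalars multiply on the $t\lb^-$ block).

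The one substantive point is that $\delta_\tau$ respects the bracket $[\ ,\ ]_I$. Using formula~\eqref{eq:2I}, for $x,x'\in\lb$ and $y,y'\in\lb^-$ one has $[(x,y),(x',y')]_I=([x,x'],H+Y)$ where $[x,y']-[x',y]=X+H+Y$ with respect to $\lg=\ln\oplus\lh\oplus\ln^-$. I would check the bracket on the three types of pairs: on $\lb\times\lb$ the bracket lands in $\lb$ and $\delta_\tau$ is the identity there, so compatibility is trivial; on $t\lb^-\times t\lb^-$ the bracket vanishes (abelian ideal), so both sides are zero; the only case requiring care is $\lb\times t\lb^-$. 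There $[(x,0),(0,y')]_I$ has $B$-component $0$ and $t\lb^-$-component coming from the $\lh\oplus\ln^-$ part of $[x,y']$, while $\delta_\tau$ scales the input $(0,y')$ by $\tau$ and scales the output (which lies in $t\lb^-$) by $\tau$ as well — so both sides pick up exactly one factor of $\tau$ and agree. Equivalently, and perhaps cleaner, I would observe that $\delta_\tau$ is nothing but the automorphism obtained from the $\CC^*$-action on $\lgt=\CC[t^{\pm1}]\otimes\lg$ by $t\mapsto\tau t$, which preserves $\lbt$ and the ideal $t\lnt$, hence descends to $\lbt/t\lnt\cong I\lb$; this reduces the verification to the already-recorded decompositions $\lbt=\CC[t]\lb\oplus t\CC[t]\ln^-$ and $\lnt=\CC[t]\ln\oplus t\CC[t]\lb^-$ of Section~\ref{sec:KM_interpret}.

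I do not expect a real obstacle here: the statement is essentially a bookkeeping check, and the only mild subtlety is making sure the grading-by-powers-of-$t$ viewpoint is compatible with the identification $I\lb=\lb\oplus t\lb^-$, i.e. that $\lb$ sits in degree $0$ and $\lb^-$ in degree $1$ after the identification with $\lbt/t\lnt$. Once that is noted, injectivity and the homomorphism property are formal, and the bracket compatibility is the $\lb\times t\lb^-$ computation above.
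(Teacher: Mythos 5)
Your proof is correct and takes essentially the same approach as the paper, which simply records that it is a straightforward check on $\lb\ltimes t\lb^{-}$ that the $\delta_\tau$ are automorphisms — the case-by-case bracket verification you spell out is exactly that check. Your alternative viewpoint (descending the substitution $t\mapsto\tau t$ from $\lgt$ to $\lbt/t\lnt$) is precisely the paper's own remark immediately following the lemma.
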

We denote by $D\subset \Aut(I\lb)$ the image of this map

\begin{proof}{}
It is a straightforward check on $\lb\ltimes t\lb^{-}$ that the $\delta_{\tau}$ are automorphisms of $I\lb$.
\end{proof}

\begin{remark}
The map
$\delta_\tau$ corresponds to the variable changing $t\mapsto \tau t$ in the $\CC[t]$-Lie algebra $\lbt/t\lnt$. 
Moreover, the Lie algebra of $D$ acts on $I\lb$ like $\CC d$ where $d$ is the derivation involved in the definition of $\lg^{KM}$.
\end{remark}

\subsection{Automorphisms stabilizing the Cartan subalgebra}
\label{sec:AutDtilde}

For any $\alpha\in\Delta(I\lb)$, fix generators $e_{\alpha}$ of $\lgt$, $\alpha\in \tilde \Delta$ giving rise to elements $X_\alpha\in I\lb_{\alpha}$ in the
corresponding root space $(I\lb)_\alpha$. 
 Set
$$
\Gamma:=\left\{\theta\in \Aut(I\lb)\,
\left|
  \begin{array}{l}

\theta(\lh)\subset\lh\\
\theta(\{X_	\alpha\,:\,\alpha\in \Delta(I\lb)\})=\{X_	\alpha\,:\,\alpha\in\Delta(I\lb)\}
  \end{array}
\right .
\right\}.
$$

Note that, since $\lc$ is the sum of $\lh$ with $\lz(I\lb)$ and since the center is characteristic, the elements of $\Gamma$ also stabilze $\lc$. 

\begin{prop}
\label{prop:AutDtilde}
  The group $\Gamma$ is isomorphic to the automorphism 
group of the affine Dynkin diagram of $\lg$. 
\end{prop}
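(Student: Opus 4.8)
The plan is to construct an explicit isomorphism between $\Gamma$ and $\Aut(\Dt_\lg)$ by going back and forth through the Kac-Moody picture. First I would observe that we already have a natural group homomorphism in one direction: given $\theta\in\Aut(\Dt_\lg)$, Section~\ref{sec:affLA} produces $\tilde\theta\in\Aut(\lgt)$ which stabilizes $\lh+\CC d+\lz(\lgkm)$ and $\lbkm$ and permutes the $e_\alpha$, $f_\alpha$ ($\alpha\in\tilde\Delta$) according to $\theta$. By Lemma~\ref{Ct-lin}, $\tilde\theta$ stabilizes $t\lnt$, hence it descends to an automorphism of $\lbt/t\lnt\cong I\lb$ (using that $\tilde\theta$ stabilizes $\lbt=\CC[t]\lb\oplus t\CC[t]\ln^-$ — this needs a small check, since $\lbt$ is the image of $\lbkm\cap[\lgkm,\lgkm]$, which is characteristic-ish but not obviously $\tilde\theta$-stable; here one uses that $\lbkm$ is $\theta^{KM}$-stable and $[\lgkm,\lgkm]$, $\lz(\lgkm)$ are characteristic). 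The resulting automorphism of $I\lb$ sends $\lh$ into $\lh$ (as $\tilde\theta$ stabilizes $\lh$, viewing $\lh=\lgt_0\cap\lg$) and permutes the $X_\alpha$ ($\alpha\in\Delta(I\lb)$) exactly as $\theta$ permutes $\tilde\Delta$ — the point being that the weight-space identification $(I\lb)_\alpha\cong\lgt_{\varphi(\alpha)}$ from Section~\ref{sec:roots} intertwines the $\lc$-action with the $(\lh+\CC d)$-action. This gives a homomorphism $\Aut(\Dt_\lg)\to\Gamma$; injectivity is clear since $\theta$ is recovered from the induced permutation of $\Delta(I\lb)$.

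Next I would prove surjectivity, i.e. that every $\theta\in\Gamma$ arises this way. Given $\theta\in\Gamma$, it stabilizes $\lc$, hence acts on $\lc^*=\lh^*\times t\lh^*$ and permutes the $\lc$-weight spaces of $I\lb$; since it permutes the $X_\alpha$ ($\alpha\in\Delta(I\lb)$), it induces a permutation $\bar\theta$ of $\Delta(I\lb)$. The heart of the matter is to show $\bar\theta$ is an automorphism of the extended Dynkin diagram, i.e. it preserves the Cartan integers $a_{\alpha,\beta}$. This is where Lemma~\ref{Ib2}(2) does the work: for $\lg\neq\sl_2$, the entry $a_{\alpha,\beta}$ is intrinsic to $I\lb$, namely $a_{\alpha,\beta}=-\max\{n\in\NN\mid \beta+n\alpha\in\Phi(I\lb)\}$, and $\theta$ permutes $\Phi(I\lb)$ compatibly with its action on weight spaces. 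A subtlety: $\theta$ acting on $\lc^*$ need not send the "formal root" $\alpha$ to $\bar\theta(\alpha)$ on the nose (the $\lc$-weights all live in $\lh^*\times 0$, so $\theta$ could scale/shift), but the combinatorial content — which weight spaces bracket into which — is preserved, and that is exactly what pins down $a_{\alpha,\beta}$. So $\bar\theta\in\Aut(\Dt_\lg)$. Then I would compare $\theta$ with the automorphism $\widetilde{\bar\theta}$ constructed in the first paragraph from $\bar\theta$: both lie in $\Gamma$, both induce $\bar\theta$ on $\Delta(I\lb)$, so $\theta\circ\widetilde{\bar\theta}^{-1}$ fixes $\lh$, fixes each ray $\CC X_\alpha$, and fixes each simple root as a weight — hence it is (the restriction of) a diagonal torus element $\exp\ad(h)$ for some $h\in\lh$ composed possibly with a genuine rescaling; but an element of $\Gamma$ fixing every $\Delta(I\lb)$-weight and every $\CC X_\alpha$ must rescale each $X_\alpha$ by a scalar, and the constraint of being a Lie algebra automorphism that also fixes $\lh$ pointwise forces these scalars to come from $\exp\ad(h)$, i.e. $\theta\circ\widetilde{\bar\theta}^{-1}\in\Ad(T)$. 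One then absorbs this: by the structure of $\Gamma$ (or rather, by modifying the choice of $\widetilde{\bar\theta}$ within its $\Ad(T)$-coset, or by noting the $X_\alpha$ were fixed once and for all so actually $\theta\circ\widetilde{\bar\theta}^{-1}=\Id$), one concludes $\theta=\widetilde{\bar\theta}$.

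The cleanest packaging, which I would adopt if the above "torus" bookkeeping gets unwieldy, is: since the $X_\alpha$ are fixed generators and $\theta(\{X_\alpha\})=\{X_\alpha\}$ \emph{as a set} with each element sent exactly to another $X_{\bar\theta(\alpha)}$ (no scalars, by definition of $\Gamma$), and $\theta(\lh)\subset\lh$, the pair $(\theta|_\lh, \bar\theta)$ is rigidly determined, and conversely $\widetilde{\bar\theta}$ shares the same $\bar\theta$ and sends $X_\alpha\mapsto X_{\bar\theta(\alpha)}$; so $\theta\circ\widetilde{\bar\theta}^{-1}$ fixes every $X_\alpha$ and stabilizes $\lh$. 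Since the $X_\alpha$ ($\alpha\in\Delta(I\lb)$) together with $\lh$ generate $I\lb$ (this follows from Lemma~\ref{lem:structure_IB}(5) and the fact that $\lnt$ is generated by the $\lgt_\alpha$, $\alpha\in\tilde\Delta$), and since a Lie automorphism fixing a generating set is the identity once we also check it fixes $\lh$ pointwise — which follows because $\theta|_\lh$ is forced by the $a_{\alpha,\beta}$ to act as $\bar\theta$ on $\lh^*$, matching $\widetilde{\bar\theta}|_\lh$ — we get $\theta=\widetilde{\bar\theta}$. Finally I would dispose of the excluded case $\lg=\sl_2$ separately by hand (the extended diagram is $\tilde A_1$ with $\Aut=\ZZ/2\ZZ$, and one exhibits the nontrivial element of $\Gamma$ directly, e.g. swapping the two $X_\alpha$ and negating $\lh$). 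The main obstacle I anticipate is precisely this last rigidity step — controlling the discrepancy $\theta\circ\widetilde{\bar\theta}^{-1}$ and ruling out extra scalar freedom — together with the technical point that $\tilde\theta$ genuinely preserves $\lbt$ (not just $\lnt$) so that the descent to $I\lb=\lbt/t\lnt$ is legitimate; both are routine but must be stated carefully, and the $\sl_2$ exception must be flagged since Lemma~\ref{Ib2}(2) fails there.
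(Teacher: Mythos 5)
Your proposal is correct and takes essentially the same route as the paper, merely running the isomorphism in the opposite direction: the paper defines $\Theta\,:\,\Gamma\to\Aut(\Dt_\lg)$ using Lemma~\ref{Ib2}(2) (with the same $\mathfrak{sl}_2$ caveat, handled in a footnote), proves surjectivity via the Kac--Moody lift $\tilde\theta$ together with Lemma~\ref{Ct-lin}, and proves injectivity by exactly your rigidity argument (an automorphism fixing $\lh$ pointwise and each $X_\alpha$ is the identity, since these generate $I\lb$). Your bookkeeping with $\theta\circ\widetilde{\bar\theta}\inv$ is just the paper's kernel computation in disguise, so there is nothing substantively different to compare.
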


\begin{proof}
By construction, $\Gamma$ induces an action on $\Delta(I\lb)$. 
By Lemma~\ref{Ib2}~(2), we have for $g\in \Gamma$
and $\alpha,\beta\in \Delta(I\lb)$:  
$$
\begin{array}{l@{\;}l@{\;}l}
  a_{\alpha,\beta}&=-\max\{n|(\ad X_{\alpha})^n(X_{\beta})\neq 0\}\\
&=-\max\{n|g((\ad X_{\alpha})^n(X_{\beta}))\neq 0\}\\
&=-\max\{n|(\ad X_{g(\alpha)})^n(X_{g(\beta)})\neq 0\}&=a_{g(\alpha),g(\beta)}.
\end{array}
$$ 
Hence $g$ actually induces an automorphism of the extended Dynkin 
diagram\footnote{If $\lg$ is $\mathfrak{sl}_2$, Lemma~\ref{Ib2}~(2)
  does not apply.
 However, any permutation of $\tilde \Delta$ is an automorphism of the extended Dynkin diagram in this case.} and
we thus obtain a group homomorphism
\[  \Theta\,:\Gamma\rightarrow\Aut(\Dt_\lg).
\]

We claim that $\Theta$ is surjective. 
Indeed, fix an automorphism $\theta$ of the group $\Dt_\lg$. As was mentioned in
Section~\ref{sec:affLA}, there exists $\tilde \theta \in \Aut(\lgt)$
which stabilizes both $\lh$ and $\lbt$ and which permutes the generators $\{e_\alpha: \alpha\in \tilde \Delta\}$ and thus $\tilde\Delta\stackrel{\varphi}{\cong} \Delta(I\lb)$ as $\theta$ does. By Lemma \ref{Ct-lin}, 
$\tilde\theta$ stabilizes $t\lnt$, 
so induces the desired element of $\Aut(\lbt/t\lnt)$.

We now prove that $\Theta$ is injective. 
Let $\theta$ in its kernel. 
By the definition of the group $\Gamma$, $\theta$ stabilizes
$\lh$. 
Since the restrictions of the elements of $\Delta(I\lb)$ span
$\lh^*$, the restriction of $\theta$ to $\lh$ has to
be the identity. 
In particular, $\theta$ acts trivially on $\Phi(I\lb)$ and stabilizes
each root space $(I\lb)_\alpha$ for $\alpha\in \Phi(I\lb)$.
But $\theta$ stabilizes the set $\{X_
\alpha\,:\,\alpha\in\Delta(I\lb)\}$. 
Hence $\theta$ acts trivially on each $\lgt_\alpha$ for $\alpha\in
\Delta(I\lb)$.
Since $\lnt$ is generated by the $(\lgt_\alpha)_{\alpha\in \Delta(I\lb)}$,  
the restriction of $\theta$ to $\lnt/t\lnt$ is the
identity map. 
Finally, $\theta$ is trivial and $\Theta$ is injective.
\end{proof}

\begin{remark}
\begin{enumerate}
\item \cite[Theorem~2]{BNVDV} is the construction of an explicit order $n$ 
automorphism of $\gl^\epsilon_{n+}$. We can also interpret this automorphism in terms of the isomorphism $\gl^{\epsilon}_{n+}\cong \lbt/(t-\epsilon )\lnt$ of Theorem~\ref{th:main}.
Indeed, let $\theta$ be the cyclic automorphism of the extended Dynkin diagram in type $A_{\ell}$ and let $\tilde \theta$ be the automorphism of $\lg$ associated to $\theta$ as in Section~\ref{sec:affLA}. By Lemma~\ref{Ct-lin} and the subsequent remark, $\tilde \theta$ induces an automorphism of $\lbt/(t-\epsilon)\lnt$.  Moreover, it is easily checked that the action on layer $1$ in \cite{BNVDV} is a cyclic permutation of the generators $(e_{\alpha})_{\alpha\in \tilde \Delta}$.
 
\item Consider the trivial vector bundle
  $\underline{\vV}:=\vV\times\bAA^1$ over $\bAA^1=\Spec(\CC[\epsilon])$. 
The Lie bracket $[\ ,\ ]_\epsilon$ endows $\underline{\vV}$ with a
structure of a Lie algebra bundle meaning that $[\ ,\ ]_\epsilon$ can
be seen as a section of the vector bundle $\bigwedge^2 \underline{\vV}^*\otimes \underline{\vV}$
satisfying the Jaccobi identity. Consider the group
$\Aut(\underline{\vV}, [\ ,\ ]_\epsilon)$ consisting of automorphisms
of the vector bundle $\underline{\vV}$ respecting the Lie bracket
pointwise. Let $\theta\in \Aut(\tilde{\mathcal D})$ and assume that the $\tilde\theta\in \Aut(\lgt)$ is $\CC[t]$-linear (\emph{i.e.} $\lambda=1$ in Lemma~\ref{Ct-lin}). Then it is easy to check that $\tilde\theta$ induces an element of $\Aut(\underline{\vV}, [\ ,\ ]_\epsilon)$. In other words, $\theta$ lifts to a $\bAA^1$-family of automorphisms over the $\bAA^1$-family of Lie algebras $\underline{\vV}$. 
\end{enumerate}
\end{remark}

\section{Description of $\Aut(I\lb)$}
\label{sec:structAut}

In this section, we describe the structure of \[
\Aut(I\lb)=\{g\in\GL(I\lb)\,:\, \forall X,Y\in I\lb\qquad
g([X,Y])=[g(X),g(Y)]\} 
\]
in terms of the subgroups $U$, $\Ad(IB)$, $D$ and $\Gamma$ introduced in Section~\ref{sec:subgroups}.

Observe that $\Aut(I\lb)$ is a Zariski closed subgroup of the linear group $\GL(I\lb)$.

\begin{theo}\label{theo:structure_Aut}
We have the following decompositions
\[\Aut(I\lb)=\Gamma\ltimes(D\ltimes(\Ad(IB)\times U)),\] 
\[\Aut(\oIlb)=\Gamma\ltimes(D\ltimes(\Ad(IB)).\] 
In particular, the neutral component is $\Aut(I\lb)^{\circ}=D\ltimes(\Ad(IB)\times U)$ and $\Gamma\cong \Aut(\tilde{\mathcal D}_{\lg})$ can be seen as the component group of $\Aut(I\lb)$.
\end{theo}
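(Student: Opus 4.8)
The plan is to assemble the decomposition from the subgroups already constructed in Section~\ref{sec:subgroups}, using Lemma~\ref{lem:Homhz} (applied to $\la = I\lb$, which is legitimate since $\lz(I\lb)=t\lh\subset \lnt/t\lnt = [I\lb,I\lb]$ by Lemma~\ref{lem:structure_IB}) to control the kernel $U$, and handling the quotient $\Aut(\oIlb)$ first. The first step is to prove the $\oIlb$ statement: I would show $\Aut(\oIlb)=\Gamma\ltimes(D\ltimes\Ad(IB))$ by a normalizer/conjugacy argument. Given $g\in\Aut(\oIlb)$, the Cartan subalgebra $\lh$ of $\oIlb$ is carried by $g$ to another Cartan subalgebra; since $\ln+t\lb^-$ (the nilradical, equal to $[\oIlb,\oIlb]$) is characteristic, and the adjoint group $N\ltimes\lg/\lb=\Aut_e$ acts transitively on the Cartan subalgebras of $\oIlb$ in a suitable sense (standard conjugacy of Cartan subalgebras, or rather of the maximal toral quotients), one can compose $g$ with an element of $\Ad(IB)$ so that $g$ stabilizes $\lh$. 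Then $g$ permutes the $\lh$-root spaces, inducing a permutation of $\Phi\cong\Phi(\oIlb)$; composing with a further element of $\Ad(IB)$ (the torus $H$) and of $D$ one normalizes the action on root vectors so that $g$ preserves each $(I\lb)_\alpha$ for $\alpha\in\Delta(I\lb)$ and sends each chosen $X_\alpha$ to a scalar multiple; a final rescaling shows $g$ can be brought into $\Gamma$. The content here is that $\Aut(\oIlb)$ is generated by $\Ad(IB)$, $D$ and $\Gamma$, and that the intersections are as expected so the products are semidirect.

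The second step is to lift this to $\Aut(I\lb)$. By Corollary~\ref{coro:U_barIb}, $R:\Aut(I\lb)\to\Aut(\oIlb)$ has kernel exactly $U$ (the normal subgroup of Lemma~\ref{lem:Homhz}), and the image lands in $\Aut(\oIlb)$; I must show $R$ is surjective, i.e. that every automorphism of $\oIlb$ lifts. But $\Ad(IB)$, $D$ and $\Gamma$ are already defined as subgroups of $\Aut(I\lb)$ and their images under $R$ are precisely the corresponding subgroups of $\Aut(\oIlb)$ used in Step~1 (one checks $R$ restricted to each of them is injective — e.g. $R|_\Gamma$ is injective because an element of $\Gamma$ acting trivially on $\oIlb$ acts trivially on $\lh$ and on the $X_\alpha$, hence on all of $\lnt/t\lnt$, hence is the identity on $I\lb$; similarly for $D$ and $\Ad(IB)$, using $Z(IB)=t\lh$). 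Hence $R$ is surjective and the sequence $0\to U\to\Aut(I\lb)\xrightarrow{R}\Aut(\oIlb)\to 0$ is short exact, which also yields the promised equality in Lemma~\ref{lem:Autgen}. Writing $\Aut(I\lb)=R^{-1}(\Gamma)\cdot R^{-1}(D)\cdot R^{-1}(\Ad(IB))$ and noting $U$ is normal gives the decomposition, provided one checks $U$ is normalized by (and has trivial intersection with) each of $\Gamma$, $D$, $\Ad(IB)$; normality of $U$ in all of $\Aut(I\lb)$ is already Corollary~\ref{coro:U_barIb}(1), and triviality of the intersections is clear since nontrivial elements of $U$ act trivially on $\oIlb$ while nontrivial elements of the other three subgroups do not.

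The third step is to verify the semidirect product structure, i.e. the order of the factors and that the ``outer'' factors act on the ``inner'' ones by conjugation as indicated: $\Ad(IB)\times U$ is a direct product (they commute: $U$ acts trivially mod center and $\Ad(IB)$ fixes the center pointwise... more precisely one computes $\bar u\circ\Ad(g)\circ\bar u^{-1}=\Ad(g)$ using that $\Ad(g)$ preserves $t\lh$, $[I\lb,I\lb]$ and the quotient); $D$ normalizes $\Ad(IB)\times U$ (conjugation by $\delta_\tau$ is a straightforward computation on generators, rescaling the $t\lb^-$-part); and $\Gamma$ normalizes $D\ltimes(\Ad(IB)\times U)=\Aut(I\lb)^\circ$ automatically since the neutral component is characteristic. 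That $\Aut(I\lb)^\circ=D\ltimes(\Ad(IB)\times U)$ follows because this subgroup is connected (each factor is: $D\cong\CC^*$, $\Ad(IB)$ is connected as a quotient of the connected group $IB$, $U$ is a vector group) and has finite index equal to $|\Gamma|=|\Aut(\Dt_\lg)|$. The main obstacle I expect is Step~1, specifically the conjugacy argument reducing an arbitrary automorphism to one stabilizing $\lh$ and then to one lying in $\Gamma$ — one has to be careful that $\oIlb$ is not reductive, so the usual conjugacy of Cartan subalgebras must be invoked in the correct form (e.g. via the nilpotent elements and the action of $\exp\ad(\ln+t\lb^-)$), and one must track exactly which scalars can be absorbed by $H\subset\Ad(IB)$ versus by $D$ to land in $\Gamma$ rather than in a larger group.
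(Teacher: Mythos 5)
Your overall architecture matches the paper's: reduce an arbitrary automorphism to one stabilizing the Cartan subalgebra via conjugacy of Cartan subalgebras, peel off factors from $\Ad(IB)$, $U$, $H$, $D$ and $\Gamma$ until nothing remains, and separately verify normality and trivial intersections (these are the paper's Lemmas~\ref{lem:Autgen}, \ref{lem:U_AdIB} and \ref{lem:DG}; the paper works on $I\lb$ directly and deduces the $\oIlb$ case, whereas you treat $\Aut(\oIlb)$ first and lift along $R$ --- a harmless reorganization that amounts to the same argument).

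The genuine gap is in your Step~1, at the sentence claiming that composing with $H$ and $D$ ``normalizes the action on root vectors so that $g$ preserves each $(I\lb)_\alpha$ for $\alpha\in\Delta(I\lb)$.'' Composing with $H$ and $D$ cannot achieve this: both act diagonally on the root spaces, so they never alter the permutation of $\Phi(I\lb)$ induced by $g$. What is actually needed --- and what you never argue --- is (i) that this permutation preserves the subset $\Delta(I\lb)\subset\Phi(I\lb)$, and (ii) that its restriction to $\Delta(I\lb)$ is an automorphism of the extended Dynkin diagram, hence is realized by an element of $\Gamma$; only then can a final rescaling by $H\times D$ (which does realize every scalar tuple indexed by $\Delta(I\lb)$) bring $g$ into $\Gamma$. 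This is precisely where $\Aut(\Dt_\lg)$ enters, i.e.\ the heart of the theorem. The paper obtains (i) from the fact that $I\lb^{(2)}$ is characteristic and its root spaces are exactly those indexed by $\Phi(I\lb)\setminus\Delta(I\lb)$ (Lemma~\ref{Ib2}(1)), and (ii) from the formula $a_{\alpha,\beta}=-\max\{n\;|\;\beta+n\alpha\in\Phi(I\lb)\}$ (Lemma~\ref{Ib2}(2), used as in Proposition~\ref{prop:AutDtilde}). Without such an argument your reduction stalls at an automorphism inducing an a priori arbitrary permutation of $\Phi(I\lb)$. The remainder of your plan --- the exact sequence $0\to U\to\Aut(I\lb)\to\Aut(\oIlb)$, the commutation of $U$ with $\Ad(IB)$, and the connectedness count identifying $\Gamma$ with the component group --- is sound and agrees with the paper.
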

The result is a consequence of the lemmas provided below. 
Indeed, by Lemma~\ref{lem:Autgen}, the four subgroups generate
$\Aut(I\lb)$. By Corollary~\ref{coro:U_barIb}(1) and
Lemma~\ref{lem:U_AdIB} below, the subgroup generated by $U$ and $\Ad(IB)$ is a direct product $U\times \Ad(IB)$. Then the structure of $\Aut(I\lb)$ follows from Lemma~\ref{lem:DG}. That of $\Aut(\oIlb)$ follows the same lines, using Corollary~\ref{coro:U_barIb}(2). Note that we have identified $\Gamma$, $\Ad(IB)$ and $D$ with their image under $R$, via Lemma~\ref{lem:Homhz}.

Since $D$, $\Ad(IB)$ and $U$ are connected and $\Gamma$ is discrete,  $\Aut(I\lb)=\bigsqcup_{g\in \Gamma} gD\Ad(IB)U$ is a finite disjoint union of irreducible subsets of the same dimension.  They are thus the irreducible components of $\Aut(IB)$ and the remaining statements of Theorem~\ref{theo:structure_Aut} follow.\\

\begin{lemma}
\label{lem:U_AdIB}
The subgroups
$U$ and $\Ad(IB)$ are normal in $\Aut(I\lb)$. Moreover, $U\cap \Ad(IB)=\{\Id\}$. 
\end{lemma}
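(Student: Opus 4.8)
The plan is to prove the three assertions in the order in which the structure theory makes them cheapest: first normality of $U$, then normality of $\Ad(IB)$, and finally the triviality of the intersection.

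Normality of $U$ is essentially already in hand. By Corollary~\ref{coro:U_barIb}(1), $U$ is the kernel of the homomorphism $R\colon\Aut(I\lb)\to\Aut(\oIlb)$ restricted to the source, or more precisely $U$ consists of those automorphisms inducing the identity on $I\lb/\lz(I\lb)$; a kernel is automatically normal. Concretely, for $\theta\in\Aut(I\lb)$ and $\bar u\in U$ one checks that $\theta\bar u\theta^{-1}$ again acts as the identity modulo the center, hence lies in $U$ by the exactness statement of Lemma~\ref{lem:Homhz}. So this step is a one-line appeal to Lemma~\ref{lem:Homhz} together with the fact that $\lz(I\lb)=t\lh\subset\lnt/t\lnt=[I\lb,I\lb]$ from Lemma~\ref{lem:structure_IB}(4)--(5).

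Normality of $\Ad(IB)$ is the substantive point, and I expect it to be \emph{the main obstacle}. The clean way is to identify $\Ad(IB)$ intrinsically. By the last Lemma before this one, $\Ad(IB)=\exp\ad(I\lb)$, the group generated by $\exp\ad X$ for $X$ ranging over all $\ad$-nilpotent elements of $I\lb$ — indeed $\Ad(IB)=\exp\ad(\ln+t\lb^-)$ and $\ln+t\lb^-=\lnt/t\lnt$ is exactly the set of $\ad$-nilpotent elements by Lemma~\ref{lem:structure_IB}(3). Now for any $\theta\in\Aut(I\lb)$ and any $\ad$-nilpotent $X$, the element $\theta(X)$ is again $\ad$-nilpotent (automorphisms preserve $\ad$-nilpotency, since $\ad\theta(X)=\theta\circ\ad X\circ\theta^{-1}$), and $\theta\circ(\exp\ad X)\circ\theta^{-1}=\exp(\ad\theta(X))$. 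Hence conjugation by $\theta$ sends the generating set of $\Ad(IB)$ into itself, so $\theta\,\Ad(IB)\,\theta^{-1}=\Ad(IB)$. The only care needed is to record that $\exp\ad X$ genuinely lies in $\GL(I\lb)$ (finite sum, as $\ad X$ is nilpotent) and that the identity $\theta\circ\exp(\ad X)\circ\theta^{-1}=\exp(\theta\circ\ad X\circ\theta^{-1})=\exp(\ad\theta(X))$ holds termwise.

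Finally, $U\cap\Ad(IB)=\{\Id\}$. An element of $U$ is of the form $\bar u\colon X\mapsto X+u(X)$ with $u\in\Hom(I\lb/[I\lb,I\lb],\lz(I\lb))$; in particular $\bar u$ acts as the identity on $[I\lb,I\lb]=\lnt/t\lnt$ and as the identity on the quotient $I\lb/\lz(I\lb)$. On the other hand, by Corollary~\ref{coro:U_barIb}(2) (or directly), a nontrivial element of $\Ad(IB)$ acts nontrivially on $\oIlb=I\lb/\lz(I\lb)$: the homomorphism $IB\to\Aut(I\lb)$ has kernel $Z(IB)=(1,\lh)$, and $\lh$ maps isomorphically onto $\lz(I\lb)=t\lh$ under $\mathrm{ad}$ composed with the quotient — concretely, the induced map $\Ad(IB)\to\Aut(\oIlb)=R(\Aut(I\lb))$ is injective since $\Ad(IB)\cong B\ltimes\lg/\lb$ and the center $t\lh$ that $U$ exploits is precisely what was already quotiented out. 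So if $\bar u=\Ad(b,f)\in U\cap\Ad(IB)$, then $R(\Ad(b,f))=\Id_{\oIlb}$, forcing $\Ad(b,f)=\Id$; and then $\bar u=\Id$, i.e. $u=0$. This closes the lemma.
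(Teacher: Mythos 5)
Your treatment of the two normality statements follows the same route as the paper. For $U$, normality is immediate from Lemma~\ref{lem:Homhz}: by exactness, $U=\Ker(R)$, and the hypothesis $\lz(I\lb)=t\lh\subset[I\lb,I\lb]$ is supplied by Lemma~\ref{lem:structure_IB}. For $\Ad(IB)$, the conjugation identity $\theta\circ\exp(\ad X)\circ\theta^{-1}=\exp(\ad\,\theta(X))$ is exactly the paper's argument. One misstatement to correct, however: $\Ad(IB)=\exp\ad(I\lb)$ is generated by the exponentials of $\ad X$ for \emph{all} $X\in I\lb$, not only the $\ad$-nilpotent ones; the latter generate the strictly smaller group $\Aut_e(I\lb)=N\ltimes\lg/\lb$, which omits the torus $H$. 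As written, your argument only proves normality of $\Aut_e(I\lb)$. The repair is immediate --- the same identity holds termwise for arbitrary $X$, the series converging since $I\lb$ is finite dimensional --- but you must drop the restriction to $\ad$-nilpotent generators.

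For the intersection you take a genuinely different route: since $U=\Ker(R)$, you reduce $U\cap\Ad(IB)=\{\Id\}$ to the injectivity of $R|_{\Ad(IB)}$. The reduction is clean and the claim is true, but your justification (``the center $t\lh$ that $U$ exploits is precisely what was already quotiented out'') restates the claim rather than proving it: it is not automatic that passing to $I\lb/\lz(I\lb)$ kills no automorphism coming from $IB$. What must be shown is that if $(b,f)\in IB$ acts trivially on $I\lb/t\lh$ then $(b,f)\in Z(IB)=(1,\lh)$. You can obtain this by the paper's direct computation --- evaluate $\Ad(b,f)(h)=b\cdot h+t\,b\cdot([f,h]+\ln)$ for $h\in\lh$, deduce first that $b$ normalizes $\lh$, hence $b\in T$, and then that the difference $\Ad(b,f)(h)-h$ lies in $t\ln^-$, which meets $t\lh$ trivially, forcing $u=0$ --- or more conceptually by noting that $R\circ\Ad$ is the adjoint representation of the connected group $B\ltimes\lg/\lb$ on its Lie algebra $\oIlb$, whose kernel is the center of that group, and that this center is trivial because $Z(B)=\{1\}$ in the adjoint group and $(\lg/\lb)^T=0$. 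Either way, this step needs to be supplied; without it the key injectivity assertion is unproved.
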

\begin{proof}
Recall that $\Ad(IB)$ is generated by the exponentials of $\ad(x)$ with $x\in I\lb$. Then for any $\theta\in \Aut(I\lb)$,  \[\theta \Ad(IB)\theta^{-1}=\theta \exp(I\lb)\theta^{-1}=\exp(\theta(I\lb))=\exp(I\lb)=\Ad(IB).\]

Let $(b,f)\in IB$ and $h\in \lh$. Then $\Ad(b,f)(h)=b\cdot h+t\,b\cdot ([f,h]+\ln)$. Assuming that $\Ad(b,f)=\bar u\in U$, we have $\Ad(b,f)(\lh)\subset \lh+\lz$ so $\Ad(b)(\lh)\subset \lh$, that is $b$ belongs to the normalizer of $\lh$ in $B$, which turns to be $T$. In particular, $b\cdot [f,\lh]\subset \ln^-$ and $Ad(b,f)(\lh)\subset \lh+(\ln+t\ln^-)$. Hence $u=0$ and finally $\Ad(IB)\cap U=\{\Id\}$. 
\end{proof}

\begin{lemma}
\label{lem:Autgen}
We have $\Aut(I\lb)=\Gamma D\Ad(IB)U$ and $\Aut(\oIlb)=\Gamma D\Ad(IB)$.
\end{lemma}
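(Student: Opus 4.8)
The plan is to show that an arbitrary $\theta\in\Aut(I\lb)$ can be reduced, step by step, to the identity by composing with elements of $\Gamma$, $D$, $\Ad(IB)$ and $U$. The natural order is: first use $\Gamma$ to fix the set $\Delta(I\lb)$ of simple roots, then use $D$ and $\Ad(IB)$ to normalize the Cartan subalgebra and the chosen root vectors, and finally invoke Lemma~\ref{lem:Homhz} to absorb the remainder into $U$. Throughout, I will pass to the quotient $\oIlb$ via the homomorphism $R$ of \eqref{proj_Aut} whenever that simplifies matters, since by Corollary~\ref{coro:U_barIb} the kernel of $R$ is exactly $U$, so it suffices to prove $R(\Aut(I\lb))=R(\Gamma D\,\Ad(IB))$ inside $\Aut(\oIlb)$ and then lift.

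The first, and I expect the hardest, step is to arrange that $\theta$ stabilizes the Cartan subalgebra $\lc=\lh\oplus t\lh$ and induces an automorphism of the extended Dynkin diagram. The point is that $\lc$ is \emph{not} canonically attached to $I\lb$ — there is no conjugacy theorem for arbitrary non-reductive Lie algebras — so one needs an intrinsic characterization. Here the structural results of Lemma~\ref{lem:structure_IB} and Lemma~\ref{Ib2} do the work: $I\lb^{(2)}=[I\lb^{(1)},I\lb^{(1)}]$ together with $\lz(I\lb)=t\lh$ are characteristic, and modulo these the ``simple root spaces'' $(I\lb)_\alpha$, $\alpha\in\Delta(I\lb)$, are characterized combinatorially (they are the weight spaces not lying in $I\lb^{(2)}$ up to the center). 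More concretely, I would argue that $\theta$ maps some maximal torus of $\Aut(I\lb)^\circ$ — equivalently, the image of $T$ under $\Ad$ together with $D$ — to a conjugate of itself by an element of $\Ad(IB)D$; this uses that all maximal tori in the algebraic group $\Aut(I\lb)$ are conjugate, which is where algebraicity of $\Aut(I\lb)$ (noted just before Theorem~\ref{theo:structure_Aut}) enters. Composing $\theta$ with a suitable element of $\Ad(IB)D$, we may thus assume $\theta(\lc)=\lc$, hence $\theta(\lh)=\lh$ (since $t\lh=\lz(I\lb)$ is characteristic), so $\theta$ permutes the $\lc$-root spaces and, by Lemma~\ref{Ib2}(2), permutes $\Delta(I\lb)$ compatibly with the Cartan matrix; that is, $\theta$ induces $\bar\theta\in\Aut(\Dt_\lg)$.

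Next, using surjectivity of $\Theta:\Gamma\to\Aut(\Dt_\lg)$ from Proposition~\ref{prop:AutDtilde}, pick $g\in\Gamma$ with $\Theta(g)=\bar\theta$ and replace $\theta$ by $g^{-1}\theta$; now $\theta$ acts trivially on $\Delta(I\lb)$, stabilizes $\lh$, and (as in the injectivity argument in the proof of Proposition~\ref{prop:AutDtilde}) restricts to the identity on $\lh$ since $\Delta$ spans $\lh^*$. Then $\theta$ fixes every $\lc$-weight and acts on each one-dimensional simple root space $(I\lb)_\alpha=\lgt_{\varphi(\alpha)}$, $\alpha\in\Delta(I\lb)$, by a scalar $c_\alpha\in\CC^*$. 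Acting by the element of $\Ad(T)$ corresponding to a $t\in T$ with $\alpha(t)=c_\alpha$ for $\alpha\in\Delta$, and then by the element $\delta_\tau\in D$ with $\tau=c_{\alpha_0}$ (note $\varphi(\alpha_0)=\delta+\alpha_0$, so $\delta_\tau$ scales $(I\lb)_{\alpha_0}$ by $\tau$ while fixing the $(I\lb)_\alpha$, $\alpha\in\Delta$), we may further assume $\theta$ fixes each $X_\alpha$, $\alpha\in\Delta(I\lb)$. Since $\lnt$ is generated by $(\lgt_\alpha)_{\alpha\in\tilde\Delta}$, the restriction of $\theta$ to $\lnt/t\lnt=I\lb^{(1)}$ is then the identity.

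Finally, $\theta$ fixes $I\lb^{(1)}$ pointwise and fixes $\lh$ pointwise, so $(\theta-\Id)$ vanishes on $I\lb^{(1)}$ and, since $\theta$ acts trivially on $\Phi(I\lb)$ and on $\lz(I\lb)$, its image lies in $\lz(I\lb)$: indeed for $X\in(I\lb)_\alpha$ with $\alpha\neq0$ one has $\theta(X)-X$ of weight $\alpha\neq0$, while the only way to have $\theta$ preserve all brackets after the previous normalizations forces $\theta(X)=X$ there too (compare the proof of Lemma~\ref{lem:Homhz}), leaving only the possibility that $\theta-\Id$ is supported on $\lh$ with values in $t\lh=\lz(I\lb)$. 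Thus $\theta=\bar u$ for the map $u\in\Hom(I\lb/I\lb^{(1)},\lz(I\lb))$ defined by $u(h)=\theta(h)-h$, i.e. $\theta\in U$ by \eqref{def_U}. This shows $\theta\in\Gamma D\,\Ad(IB)\,U$, as desired. The statement for $\oIlb$ follows the same argument after applying $R$: since $\lz(I\lb)$ is killed, the last step produces the identity directly, giving $\Aut(\oIlb)=\Gamma D\,\Ad(IB)$; one only needs Corollary~\ref{coro:U_barIb}(2) to know $R(\Aut(I\lb))=\Aut(\oIlb)$, which is now established because every generator on the right lifts.
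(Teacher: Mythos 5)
Your overall strategy --- normalize $\theta$ step by step using $\Gamma$, $D$, $\Ad(IB)$ and finally absorb the remainder into $U$ via Lemma~\ref{lem:Homhz} --- is the same as the paper's, and your endgame (identifying the residual automorphism with some $\bar u$, and the remark that for $\oIlb$ the last step degenerates) is correct. But the first step, which you yourself identify as the hardest, contains a genuine gap. You assert that ``there is no conjugacy theorem for arbitrary non-reductive Lie algebras'' and therefore replace it by a conjugacy argument for maximal tori of the algebraic group $\Aut(I\lb)$. The premise is false: over $\CC$ (or any algebraically closed field of characteristic $0$) \emph{all} Cartan subalgebras of a finite-dimensional Lie algebra are conjugate under the group $\Aut_e$ of elementary automorphisms --- this is exactly the Bourbaki theorem the paper invokes, and the paper has already computed $\Aut_e(I\lb)=\exp\ad(\ln+t\lb^-)\subset\Ad(IB)$, so one gets $\theta_1\in\Ad(IB)\theta$ stabilizing $\lc$ for free. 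Your substitute argument is circular: to know that $\Ad(T)D$ is a maximal torus of $\Aut(I\lb)^\circ$ and, more seriously, that any two maximal tori are conjugate by an element of $\Ad(IB)D$ \emph{specifically}, you would need the description of $\Aut(I\lb)^\circ$ that Theorem~\ref{theo:structure_Aut} is trying to establish. As written, this step does not close.

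A secondary slip: from $\theta(\lc)=\lc$ and $t\lh=\lz(I\lb)$ characteristic you deduce $\theta(\lh)=\lh$. This does not follow --- $\theta$ may send $\lh$ to a different complement of $t\lh$ in $\lc$. The paper inserts an element of $U$ at this point (the step $\theta_1\to\theta_2$) to restore $\lh$. In your write-up the error is ultimately harmless because your final step only uses that $\theta$ acts trivially on $\lc/t\lh$ and pushes the discrepancy into $\lz(I\lb)$, but the intermediate claims ``stabilizes $\lh$'' and ``restricts to the identity on $\lh$'' should be corrected to statements modulo the center. Fixing the first step by citing the Cartan-subalgebra conjugacy theorem (and noting $\Aut_e(I\lb)\subset\Ad(IB)$) would make your argument essentially coincide with the paper's.
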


\begin{proof}{}
  Let $\theta\in \Aut(I\lb)$. 
Since the two Cartan subalgebras $\lc$ and $\theta(\lc)$ are
$\Ad$-conjugate (see \cite[\S 3, n$^\circ$ 2, th. 1]{Bourb:Ch78}), 
there exists $\theta_1\in \Ad(IB)\theta$ which stabilizes $\lc$.

Then $\theta_1(\lh)$ is complementary to the center $t\lh=\theta_1(t\lh)$ in $\lc$. Thus, there exists $\theta_2\in U\theta_1$ such that $\theta_2$ stabilizes $\lh$.

Since $\theta_2$ stabilizes $\lc$, it acts on $\Phi(I\lb)$. 
Moreover, $I\lb^{(1)}=[I\lb,I\lb]$ and $I\lb^{(2)}=[I\lb^{(1)},I\lb^{(1)}]$ are
characteristic and stabilized by $\theta_2$.  
So,  Lemma~\ref{Ib2} implies that $\theta_2$ stabilizes
$\Phi(I\lb)\setminus\Delta(I\lb)$ and hence $\Delta(I\lb)$.
Arguing as in the proof of Proposition~\ref{prop:AutDtilde}, we show that the induced permutation is actually an automorphism of the extended Dynkin diagram. Thus there exists $\theta_3\in \Gamma\theta_2$ with the additional property that the induced permutation on $\Delta(I\lb)$ and thus on $\Phi(I\lb)$ are trivial.
Then $\theta_3$ acts on each $(I\lb)_\alpha$ for
$\alpha\in\Delta(I\lb)$.

Since $\Delta$ is a basis of $\lh^*$, one can find $h\in H\subset B\subset IB$ such that
$\Ad(h)\circ\theta_3$ acts trivially on each $(I\lb)_\alpha$ for
$\alpha\in\Delta$. 
Moreover, $D$ acts trivially on these roots spaces and with weight $1$ on $(I\lb)_{\alpha_0}$.
This yields $\theta_4\in D\Ad(H)\Gamma U\Ad(IB)\theta$ which acts trivially on $\lh$ and on each $(I\lb)_{\alpha}$, $\alpha\in \Delta(I\lb)$. 

Recall now that 
$\lnt/t\lnt$ is generated by the spaces
$((I\lb)_\alpha)_{\alpha\in\Delta(I\lb)}$. 
Since $\theta_4$ acts trivially on $\lnt$ and on $\lh$, it has to be trivial. 
As a consequence,
$\theta\in \Ad(IB) U\Gamma\Ad(H) D=\Gamma D\Ad(IB)U$, the last equality following from Lemma~\ref{lem:U_AdIB} and Corollary~\ref{coro:U_barIb}.

Recalling that $\Phi(I\lb)=\Phi(\oIlb)$, the same proof applies for $\oIlb$ instead of $I\lb$, replacing $\lc$ by $\lh$  and skipping step from $\theta_1$ to $\theta_2$.
\end{proof}

\begin{lemma} \label{lem:DG}
The intersections
$D\cap (\Ad(IB)\times U)$ and
$\Gamma\cap (D\ltimes (\Ad(IB)\times U))$ are the trivial group $\{\Id\}$. 
Moreover, $(D\ltimes (\Ad(IB)\times U))$ is normal in $\Aut(I\lb)$.\\
\end{lemma}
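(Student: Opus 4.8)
The plan is to prove the three assertions in turn, exploiting the explicit descriptions of the subgroups and the characteristic subalgebras of $I\lb$ already established. First I would dispose of $D\cap(\Ad(IB)\times U)$. Take $\delta_\tau\in D$ lying in $\Ad(IB)\times U$. Every element of $\Ad(IB)\times U$ acts trivially on the quotient $I\lb/[I\lb,I\lb]\oplus$ (suitably), but more directly: $\delta_\tau$ acts on each root space $(I\lb)_\alpha$, $\alpha\in\Delta$, trivially and on $(I\lb)_{\alpha_0}$ by the scalar $\tau$, whereas any element of $\Ad(IB)\times U$ of the form $\Ad(b,f)\circ\bar u$ acts on $(I\lb)_{\alpha_0}=t\lg_{\alpha_0}$ via the $B$-action composed with the (trivial on root spaces) map $\bar u$; pinning down the scalar by which $\Ad(b,f)$ scales $t\lg_{\alpha_0}$ and $t\lg_{\alpha}$ ($\alpha\in\Delta$) simultaneously forces $\tau=1$. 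Concretely, if $\Ad(b,f)\bar u=\delta_\tau$ then comparing the action on $\lh$ shows (as in the proof of Lemma~\ref{lem:U_AdIB}) that $b\in T$ and $u=0$; then $\Ad(t_0)=\delta_\tau$ on $t\lb^-$ forces the character of $t_0$ on every negative root to equal $\tau$ and on every positive root to equal $1$, which is impossible for $\tau\neq1$ since the roots span $\lh^*$. Hence $\tau=1$ and the intersection is trivial.

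Next, $\Gamma\cap(D\ltimes(\Ad(IB)\times U))$. Let $g\in\Gamma$ lie in $D\Ad(IB)U$ (using Lemma~\ref{lem:U_AdIB} and Corollary~\ref{coro:U_barIb} to write the product group in this form). Since $g\in\Gamma$, it stabilizes $\lh$ and permutes $\{X_\alpha:\alpha\in\Delta(I\lb)\}$, hence induces an automorphism $\theta=\Theta(g)$ of $\Dt_\lg$; it suffices to show $\theta=\mathrm{id}$, because $\Theta$ is injective by Proposition~\ref{prop:AutDtilde}. Now write $g=\delta_\tau\,\Ad(b,f)\,\bar u$. Reading the induced permutation of $\Phi(I\lb)$: elements of $\Ad(IB)$ act trivially on $\Phi(I\lb)$ (they are inner, built from $\exp\ad$ of elements whose semisimple parts lie in $\lh$, so they fix each weight), $\bar u\in U$ acts trivially on $\Phi(I\lb)$ by construction, and $\delta_\tau$ acts trivially on $\Phi(I\lb)$ as well. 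Therefore $g$ acts trivially on $\Phi(I\lb)$, so $\theta$ fixes $\Delta(I\lb)$ pointwise, i.e.\ $\theta=\mathrm{id}$, and then $g\in U\cap\Ad(IB)\cdot D=\{\Id\}$ by the previous intersection computations together with Lemma~\ref{lem:U_AdIB}. (More carefully: $g$ fixing $\Phi(I\lb)$ and lying in $D\ltimes(\Ad(IB)\times U)$ means its image under $R$ lies in $\Ad(IB)$-part up to $D$; one finishes as above.)

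For normality of $N:=D\ltimes(\Ad(IB)\times U)$ in $\Aut(I\lb)$: by Lemma~\ref{lem:U_AdIB}, both $U$ and $\Ad(IB)$ are already normal in $\Aut(I\lb)$, so it remains to see that conjugating $D$ by an arbitrary $\theta\in\Aut(I\lb)$ lands in $N$. Using Lemma~\ref{lem:Autgen}, write $\theta=g\,n$ with $g\in\Gamma$ and $n\in N$; since $n$ normalizes $N$ it suffices to treat $\theta=g\in\Gamma$. For $g\in\Gamma$ and $\delta_\tau\in D$, compute $g\delta_\tau g^{-1}$: this is again an automorphism acting trivially on $\lh$ (both $g,g^{-1},\delta_\tau$ stabilize $\lh$ and $\delta_\tau$ is the identity there), acting on each root space $(I\lb)_\alpha$ by the scalar by which $\delta_\tau$ acts on $(I\lb)_{g^{-1}(\alpha)}$, namely $\tau^{n_\alpha}$ where $n_\alpha$ is the coefficient of $\alpha_0$ in the expression of $\varphi(g^{-1}(\alpha))$ in terms of $\tilde\Delta$. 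The point is that $g$ permutes $\tilde\Delta$, hence permutes the ``$\alpha_0$-grading''; a direct check (or the remark that $D$ is exactly the one-parameter group scaling the $\alpha_0$-weight) shows $g\delta_\tau g^{-1}$ still only scales the $\alpha_0$-graded pieces, but possibly with a different total grading if $g$ moves $\alpha_0$. In that case one absorbs the discrepancy into $\Ad(H)$: since $\Ad(H)$ and $D$ together realize all characters that are trivial on $[I\lb,I\lb]$-directions except the $\alpha_0$-direction, $g\delta_\tau g^{-1}$ is a product of an element of $D$ with an element of $\Ad(H)\subset\Ad(IB)$, hence lies in $N$.

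I expect the last step — normality of $N$, specifically controlling $g\delta_\tau g^{-1}$ when $g\in\Gamma$ moves the affine node $\alpha_0$ — to be the main obstacle, since it requires knowing precisely how the torus-type automorphisms $\Ad(H)\cdot D$ act on the full root system $\Phi(I\lb)$ and that this set of diagonal automorphisms is $\Gamma$-stable. The cleanest way to organize it is: let $A:=\Ad(H)\cdot D\subset\mathrm{GL}(I\lb)$ be the group of automorphisms acting diagonally on the root-space decomposition and trivially on $\lc$; show $A$ is normalized by $\Gamma$ (because $\Gamma$ permutes the root spaces), show $A\subset N$, and note $N=A\cdot(\Ad(N_G(T)_0\text{-part})\ltimes\lg/\lb)\cdot U$ — then normality of $N$ under $\Gamma$ reduces to normality of $\Ad(IB)$, $U$ (already known) and $A$ (just shown), finishing the proof.
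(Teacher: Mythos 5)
Your handling of $D\cap(\Ad(IB)\times U)$ is correct and close in spirit to the paper's, which evaluates $\delta_\tau=\Ad(b,f)\circ\bar u$ on $\lb$ to force $b=1_B$, $f\in\lb$ and $u=0$; in fact, once you know $b\in T$ and $u=0$ you can finish instantly by comparing the action on the centre $t\lh$, where $\Ad(b,f)$ acts trivially and $\delta_\tau$ by $\tau$. Your second intersection is also recoverable, but as written it leans on the loose claim that ``elements of $\Ad(IB)$ act trivially on $\Phi(I\lb)$'': a general element of $\Ad(IB)$ does not stabilize $\lc$, hence does not act on $\Phi(I\lb)$ at all. The correct version is that if $g=\delta_\tau\,\Ad(b,f)\,\bar u$ lies in $\Gamma$, then $g(\lc)=\lc$ forces $b\in T$ and $f\in\lb$, after which every factor is diagonal on the root spaces, the induced permutation of $\Delta(I\lb)$ is trivial, and $g=\Id$ by the injectivity of $\Theta$ in Proposition~\ref{prop:AutDtilde}.

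The genuine gap is in the normality of $N:=D\ltimes(\Ad(IB)\times U)$, exactly at the point you flag as the main obstacle: controlling $g\delta_\tau g^{-1}$ when $g\in\Gamma$ moves the affine node. The repair you sketch is not carried out and is mischaracterized: $\Ad(H)\cdot D$ does \emph{not} act trivially on $\lc$, since $\delta_\tau$ acts by $\tau$ on $t\lh$, so your group $A$ is not what you say it is. To complete the direct computation you would need to verify (i) that $g\delta_\tau g^{-1}$ acts on $(I\lb)_\alpha$ by $\tau^{c(\varphi(\alpha))}$, where $c$ is the coefficient of the simple root $\theta(\alpha_0+\delta)$ in the expansion over $\tilde\Delta$, (ii) that the node $\theta(\alpha_0+\delta)$ always has mark $1$ (otherwise the scalar on the centre $t\lh$ cannot be matched by an element of $\Ad(T)\cdot D$), and (iii) that the resulting diagonal map equals $\Ad(s)\circ\delta_\tau$ for a suitable $s$ in the adjoint torus. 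None of this is in your proposal. The paper sidesteps all of it: since $[I\lb,I\lb]$ is characteristic there is a homomorphism $p:\Aut(I\lb)\to\Aut(I\lb/[I\lb,I\lb])$; by Lemma~\ref{lem:structure_IB} the subgroups $D$, $\Ad(IB)$ and $U$ lie in $\Ker(p)$ while $p|_\Gamma$ is injective, so Lemma~\ref{lem:Autgen} gives $N=\Ker(p)$, which yields normality and $\Gamma\cap N=\{\Id\}$ in one stroke. Either adopt that argument or supply the root-combinatorial verifications above.
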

\begin{proof}
Let $\tau\in\CC^*$, $b\in B$, $f\in\lg/\ln$ and $u\in
\Hom(I\lb/[I\lb,I\lb],\lz(I\lb))$ such that the associated  elements
$\delta_\tau\in D$, $(b,f)\in IB$ and $\bar u\in U$ (see
Section~\ref{sec:subgroups}) satisfy
$\delta_{\tau}=\Ad(b,f)\circ \bar u$.
 For $x\in \lb$, we have 
\[x=\delta_{\tau}(x)=(\Ad(b,f)\circ \bar u)(x)=\Ad(b,f)(x+u(x))=b\cdot x+ (b\cdot u(x)+tb\cdot([f,x]+\ln)).\] In particular, $b\cdot x=x$ and, whenever $x\in \ln$, $b\cdot [f,x]=0$ in $\lg/\ln$. So $b\in B$ centralizes $\lb$ and $\ad_{\lg}f$ normalizes $\ln$.  As a consequence, $b=1_B$, $f$ is $0$ in $\lg/\lb$ and $u=0$. Thus the only element of $D\cap (\Ad(IB)\times U)$ is the trivial one.

Since $[I\lb,I\lb]$ is characteristic in $I\lb$, we have a natural group morphism $p:\Aut(I\lb)\rightarrow \Aut(I\lb/[I\lb,I\lb])$.
From the description of $[I\lb,I\lb]$ in Lemma~\ref{lem:structure_IB}, it is straightforward that $D$, $\Ad(IB)$ and $U$ are included in $\Ker(p)$ while $p_{|\Gamma}$ is injective. From Lemma~\ref{lem:Autgen}, we then deduce that $D\ltimes (\Ad(IB)\times U)=\Ker(p)$ and the desired properties follow.
\end{proof}


\bibliographystyle{alpha}

\bibliography{barnatan_BR}

\begin{center}
  -\hspace{1em}$\diamondsuit$\hspace{1em}-
\end{center}
\end{document}